\documentclass{amsart}
\usepackage{latexsym,amsthm, amsmath,amssymb,amsfonts}
\newtheorem{thm}{Theorem}[section]
\newtheorem{cor}[thm]{Corollary}
\newtheorem{lem}[thm]{Lemma}

\theoremstyle{definition}\newtheorem{defn}[thm]{Definition}
\theoremstyle{remark}
\newtheorem{rem}[thm]{Remark}
\numberwithin{equation}{section}


\allowdisplaybreaks[1]
\begin{document}

\title[MULTIPLICATION CONDITIONAL EXPECTATION OPERATORS]
{Some properties of MCE OPERATORS between different ORLICZ SPACES}

\author{\sc\bf Y. Estaremi }
\address{\sc Y. Estaremi}
\email{yestaremi@pnu.ac.ir}
\address{ Department of Mathematics, Payame Noor University (PNU), P. O. Box: 19395-3697, Tehran- Iran}

\thanks{}

\thanks{}

\subjclass[2010]{ 47B38}

\keywords{multiplication operator, Conditional expectation, continuous operators, closed-range operators, finite-rank operators, Orlicz space.}

\date{}

\dedicatory{}

\commby{}

\begin{abstract}
In this paper we study some basic properties, like boundedness and closedness of range, of multiplication conditional expectation(MCE) operators between different Orlicz spaces.
\noindent {}
\end{abstract}

\maketitle

\section{ \sc\bf Introduction }

 Our concern in this paper is to provide some necessary conditions, sufficient conditions, and  some simultaneously necessary and sufficient conditions for the multiplication conditional expectation or briefly MCE operators between distinct Orlicz spaces to be bounded or to have closed range or finite rank. Our results generalize and improve on some recent results to be found in the literature. One of the most important properties of MCE operators is that a large class of bounded operators on measurable functions spaces as well as on $L^p$- spaces are of the form of MCE operators. One can find many great papers that are about MCE operators on $L^p$- spaces. For instance the more important ones are \cite{dou,dhd,ej,g,lam,l,mo}. In addition, we investigated boundedness and compactness of MCE operators on Orlicz spaces in \cite{ye}. In this paper we continue our project to characterize closed range MCE operators between different Orlicz spaces.

\section{\sc\bf Preliminaries  and basic lemmas}

 In this section, for the convenience of the reader, we gather some essential facts on Orlicz spaces and prove two basic lemmas for later use. For more details on Orlicz spaces,  see \cite{kr,raor}.

A function $\Phi:\mathbb{R}\rightarrow [0,\infty]$ is called a \textit{Young function}  if $\Phi$ is   convex, even,  and  $\Phi(0)=0$; we will also assume that $\Phi$ is neither identically zero nor identically infinite on $(0,\infty)$. The fact that $\Phi(0)=0$, along with the convexity of $\Phi$, implies that $\lim_{x\rightarrow 0^+}\Phi(x)=0$; while  $\Phi\neq 0$, again along with the convexity of $\Phi$, implies that   $\lim_{x\rightarrow\infty}\Phi(x)=\infty$. We set
$a_{\Phi}:=\sup\{x\geq0:\Phi(x)=0\}$
 and
$
b_{\Phi}:=\sup\{x>0:\Phi(x)<\infty\}.
$
Then it can be checked that  $\Phi$ is continuous and nondecreasing on
$[0,b_{\Phi})$ and strictly increasing on $[a_{\Phi},b_{\Phi})$. We also assume the left-continuity of the function $\Phi$ at $b_\Phi$, i.e. $\lim_{x\rightarrow b_\Phi^-} \Phi(x)=\Phi(b_\Phi)$.

 To each Young  function $\Phi$ is  associated another
 convex function $\Psi:\mathbb{R}\rightarrow[0,\infty)$ with similar properties,  defined by
$$
\Phi^*(y)=\sup\{x|y|-\Phi(x):x\geq0\} \quad (y\in\mathbb{R}).
$$
The function $\Phi^*$ is called the  \textit{function  complementary} to $\Phi$ in the sense of Young. Any pair of complementary functions $(\Phi,\Phi^*)$ satisfies Young's inequality $xy\leq \Phi(x)+\Phi^*(y)\,\, (x,y\geq 0)$.

The generalized inverse of the Young function $\Phi$ is defined by
$$
\Phi^{-1}(y)=\inf \{ x\geq 0: \Phi(x)> y\} \quad (y\in [0,\infty)).
$$
Notice that if $x\geq0$, then $\Phi\big(\Phi^{-1}(x)\big)\leq x$,
and if $\Phi(x)<\infty$, we also have $x\leq\Phi^{-1}\big(\Phi(x)\big)$. There are equalities in either case when $\Phi$ is a Young function vanishing only at zero and taking only finite values.
Also, if $(\Phi,\Phi^*)$ is a pair of complementary
Young functions, then
\begin{equation}\label{12}
x<\Phi^{-1}(x)\Phi^{*^{-1}}(x)\leq 2x
\end{equation}
for all $x\geq0$ (Proposition 2.1.1(ii) \cite{raor}).

By an $N$-\textit{function} we mean a  Young function vanishing only at zero, taking only finite values, and such that $\lim_{x\rightarrow\infty}\Phi(x)/x=\infty$ and $\lim_{x\rightarrow 0^+}\Phi(x)/x=0$. Note that then $a_\Phi=0,$ $b_\Phi=\infty$, and, as we said above,   $\Phi$ is continuous and strictly increasing on $[0,\infty)$. Moreover, a function complementary to an $N$-function is again an $N$-function.

A Young function $\Phi$ is said to satisfy the
$\Delta_{2}$-condition at $\infty$ if $\Phi(2x)\leq
K\Phi(x) \; ( x\geq x_{0})$  for some constants
$K>0$ and $x_0>0$. A Young function $\Phi$ satisfies the
$\Delta_{2}$-condition globally if $\Phi(2x)\leq
K\Phi(x) \; ( x\geq 0)$  for some
$K>0$.

A Young function $\Phi$ is said to satisfy the
$\Delta'$-condition (respectively, the $\nabla'$-condition) at $\infty$, if there exist  $ c>0$
(respectively, $b>0$) and $x_0>0$ such that
$$
\Phi(xy)\leq c\,\Phi(x)\Phi(y) \quad (x,y\geq x_{0})
$$
$$
(\mbox{respectively, }  \Phi(bxy)\geq \Phi(x)\Phi(y) \quad ( x,y\geq x_{0})).
$$
If $x_{0}=0$, these conditions are said to hold
globally. Notice that if $\Phi\in \Delta'$, then  $\Phi\in
\Delta_{2}$ (both at $\infty$ and globally).

 Let $\Phi, \Psi$ be Young
functions. Then $\Phi$ is called stronger than $\Psi$ at $\infty$, which is denoted by $\Phi\mathrel{\overset{\makebox[0pt]{\mbox{\normalfont\scriptsize\sffamily $\ell$ }}}{\succ}}\Psi$ [or $\Psi\mathrel{\overset{\makebox[0pt]{\mbox{\normalfont\scriptsize\sffamily $\ell$}}}{\prec}}\Phi$], if
$$
\Psi(x)\leq\Phi(ax)\quad (x\geq x_0)
$$
for some $a\geq0$ and $x_0>0$; if $x_0=0$, this condition is
said to hold globally and is then denoted by $\Phi\mathrel{\overset{\makebox[0pt]{\mbox{\normalfont\scriptsize\sffamily $a$}}}{\succ}}\Psi$ [or $\Psi\mathrel{\overset{\makebox[0pt]{\mbox{\normalfont\scriptsize\sffamily $a$}}}{\prec}}\Phi$]. We record  the following observation for later use.

\begin{lem}\label{l2} If $\Phi, \Psi, \Theta$ are Young functions vanishing only at zero, taking only finite values, and such that
$$
\Phi(xy)\leq\Psi(x)+\Theta(y)\quad (x,y\geq0),
$$
then $\Psi\mathrel{\overset{\makebox[0pt]{\mbox{\normalfont\scriptsize\sffamily $\ell$}}}{\nprec}}\Phi$, and hence also  $\Psi\mathrel{\overset{\makebox[0pt]{\mbox{\normalfont\scriptsize\sffamily $a$}}}{\nprec}}\Phi$.
\end{lem}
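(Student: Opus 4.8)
The plan is to argue by contradiction. Suppose that $\Phi$ is stronger than $\Psi$ at infinity, so that there exist $a>0$ and $x_0>0$ with $\Psi(x)\le\Phi(ax)$ for all $x\ge x_0$. (The degenerate case $a=0$ is ruled out at once, since then $\Psi(x)\le\Phi(0)=0$ for large $x$, contradicting the hypothesis that $\Psi$ vanishes only at zero.) The idea is that the assumed product estimate forces $\Phi$ to grow no faster than $\Psi$ up to a dilation of the argument, while domination at infinity forces $\Psi$ to grow no faster than $\Phi$; playing these two against each other with a well-chosen scale yields a contradiction.

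First I would freeze the second variable. Choosing any fixed $y_0>a$ and setting $C:=\Theta(y_0)$, the standing hypothesis $\Phi(xy)\le\Psi(x)+\Theta(y)$ specializes to $\Phi(y_0x)\le\Psi(x)+C$ for all $x\ge0$, where $C<\infty$ because $\Theta$ takes only finite values. Combining this with $\Psi(x)\le\Phi(ax)$ gives
\begin{equation*}
\Phi(y_0x)\le\Phi(ax)+C\qquad(x\ge x_0).
\end{equation*}
The key analytic input is the elementary dilation estimate for a convex function vanishing at the origin: if $\mu\ge1$ then $\Phi(\mu t)\ge\mu\,\Phi(t)$ for all $t\ge0$, which follows by writing $t=\tfrac1\mu(\mu t)+(1-\tfrac1\mu)\cdot0$ and applying convexity together with $\Phi(0)=0$. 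Applying this with $\mu=y_0/a>1$ and $t=ax$ turns the displayed inequality into $\tfrac{y_0}{a}\Phi(ax)\le\Phi(ax)+C$, that is,
\begin{equation*}
\Big(\tfrac{y_0}{a}-1\Big)\Phi(ax)\le C\qquad(x\ge x_0).
\end{equation*}
Since $y_0/a-1$ is a fixed positive constant, this bounds $\Phi(ax)$ uniformly in $x$. But $\Phi$ is a Young function that is not identically zero, so $\Phi(ax)\to\infty$ as $x\to\infty$ (recall $a>0$), which is the desired contradiction. Hence $\Phi$ is not stronger than $\Psi$ at infinity.

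Finally, the global assertion is immediate: if $\Phi$ were stronger than $\Psi$ globally, then the defining inequality would hold in particular for all large $x$, so $\Phi$ would be stronger than $\Psi$ at infinity, which has just been excluded. I expect the only real subtlety to lie in the verification of the dilation estimate $\Phi(\mu t)\ge\mu\,\Phi(t)$ and in the bookkeeping of quantifiers — specifically, the fact that the dilation factor $y_0$ may be selected \emph{after} the constant $a$ has been produced by the contradiction hypothesis, so that one is always free to take $y_0>a$.
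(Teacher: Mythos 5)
Your proof is correct, but it takes a genuinely different route from the paper's. The paper (following \cite{ceh}) argues through generalized inverses: substituting $x=\Psi^{-1}(z)$, $y=\Theta^{-1}(z)$ into the hypothesis and using that $\Psi(\Psi^{-1}(z))=z$, $\Theta(\Theta^{-1}(z))=z$ for finite-valued Young functions vanishing only at zero, one gets $\Phi\bigl(\Psi^{-1}(z)\Theta^{-1}(z)\bigr)\leq 2z$, hence $\Psi^{-1}(z)\Theta^{-1}(z)\leq\Phi^{-1}(2z)\leq 2\Phi^{-1}(z)$; if $\Psi$ were weaker than $\Phi$ at infinity, then $\Phi^{-1}(z)\leq a\Psi^{-1}(z)$ for large $z$, forcing $\Theta^{-1}(z)\leq 2a$ eventually, which contradicts $\Theta^{-1}(z)\to\infty$ (a consequence of $\Theta$ taking only finite values). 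You instead stay on the level of the Young functions themselves: freezing $y=y_0>a$ gives $\Phi(y_0x)\leq\Phi(ax)+\Theta(y_0)$ for $x\geq x_0$, and the convexity dilation bound $\Phi(\mu t)\geq\mu\Phi(t)$ ($\mu\geq 1$) then pins $\Phi(ax)$ under a uniform bound, contradicting $\Phi(ax)\to\infty$. Your argument is more elementary and self-contained (only convexity and the definition of the ordering are used, and your handling of the $a=0$ degenerate case and of the quantifier order for $y_0$ is careful and correct). What the paper's route buys, however, is the intermediate inequality $\Psi^{-1}(x)\Theta^{-1}(x)\leq 2\Phi^{-1}(x)$, which is not merely scaffolding: it is explicitly reused in the proof of Theorem \ref{t3} (``Plug $x=1/\mu(A_n)$ in the inequality \dots derived in lines 2--3 of the proof of Lemma \ref{l2}''). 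If your proof replaced the paper's, that later reference would dangle, so the inverse-function inequality would have to be recorded separately.
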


Let $(X, \Sigma, \mu)$ be a complete $\sigma$-finite
 measure space and let $L^0(\Sigma)$  be the linear space of  equivalence classes of $\Sigma$-measurable
real-valued functions on $X$, that is, we identify functions equal $\mu$-almost everywhere on $X$. The support $S(f)$ of a
measurable function $f$ is defined by $S(f):=\{x\in X : f(x)\neq
0\}$. For a Young function  $\Phi$, the space
$$
L^{\Phi}(\Sigma)=\left\{f\in L^0(\Sigma):\exists k>0,
\int_X\Phi(kf)d\mu<\infty\right\}
$$
is a Banach space if it is equipped with the norm
$$
\|f\|_{\Phi}=\inf\left\{k>0:\int_X\Phi(f/k)d\mu\leq1\right\}.
$$
 The couple $(L^{\Phi}(\Sigma), \|\cdot\|_{\Phi})$ is called the Orlicz space generated by a Young function $\Phi$.
 Let $\Phi(x)=|x|^p/p$ with $1<p<\infty$; $\Phi$ is then a Young function and $\Psi(x)=|x|^{p'}/p'$, with $1/p+1/p'=1$, is the  Young function complementary to $\Phi$.
 Thus, with this function $\Phi$ we retrieve the classical Lebesgue space $L^p(\Sigma)$, i.e. $L^\Phi(\Sigma)=L^p(\Sigma)$.

Recall that an atom of the measure space $(X,\Sigma,\mu)$ is a set $A\in\Sigma$ with $\mu(A)>0$ such that if
$F\in\Sigma$ and $F\subset A$, then either $\mu(F)=0$ or
$\mu(F)=\mu(A)$. A measure space $(X,\Sigma,\mu)$ with no atoms is
called a non-atomic measure space. It is well-known  that if $(X, \Sigma, \mu)$ is a $\sigma$-finite measure space, then for every measurable real-valued function $f$ on $X$ and every atom $A$, there is a real number, denoted  by $f(A)$,  such that $f=f(A)\,\, \mu$-a.e. on $A$. Also, if $(X, \Sigma, \mu)$ is a $\sigma$-finite measure space that fails to be non-atomic, there is a non-empty countable set  of pairwise disjoint atoms $\{A_n\}_{n\in\mathbb{N}}$  with the property that ${B}:=X\setminus\bigcup_{n\in\mathbb{N}}A_n$ contains no atoms \cite{z}.\\
 Here we recall the next lemma that is a key tool in our investigations.
\begin{lem}\cite{ceh}\label{p1} Let $\Phi, \Psi$ be Young functions such that  $\Psi\mathrel{\overset{\makebox[0pt]{\mbox{\normalfont\scriptsize\sffamily $\ell$}}}{\nprec}}\Phi$. If $E$ is a
non-atomic $\Sigma$-measurable set with positive measure, then there exists $f\in L^{\Phi}(\Sigma)$ such that $f_{|_E}\notin L^{\Psi}(E)$.
\end{lem}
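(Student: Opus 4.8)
The plan is to realize the failure of the inclusion $L^{\Phi}(\Sigma)\subseteq L^{\Psi}(E)$ concretely: I would build a single function $f=\sum_{n}v_{n}\chi_{E_{n}}$, supported on pairwise disjoint subsets $E_{n}\subseteq E$ furnished by the non-atomicity of $E$, that is $\Phi$-integrable for the scaling $k=1$ yet satisfies $\int_{E}\Psi(kf)\,d\mu=\infty$ for \emph{every} $k>0$; the latter is precisely the assertion $f_{|_E}\notin L^{\Psi}(E)$. The values $v_{n}$ and the measures $s_{n}=\mu(E_{n})$ must be chosen so that the $\Phi$-series converges while the $\Psi$-series diverges robustly under all rescalings.

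First I would convert the qualitative hypothesis into a usable quantitative form. Observe that $\Psi\nprec\Phi$ forces $b_{\Phi}=\infty$ (otherwise $\Phi(ax)=\infty$ for all large $x$ and $\Psi\prec\Phi$ would hold trivially), so $\Phi$ is finite valued. \emph{Claim:} for every $a>0$,
\[
\limsup_{x\to\infty}\frac{\Psi(x)}{\Phi(ax)}=\infty.
\]
Indeed, if this $\limsup$ equaled a finite $C$ for some $a$, then $\Psi(x)\le(C+1)\Phi(ax)$ for all large $x$; since $\Phi$ is convex with $\Phi(0)=0$ we have $(C+1)\Phi(ax)\le\Phi\big((C+1)ax\big)$, whence $\Psi(x)\le\Phi\big((C+1)ax\big)$ for large $x$, i.e. $\Psi\prec\Phi$, contradicting the hypothesis. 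This convexity step is the only genuine analytic input.

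Applying the claim at the scale $a=n$, I would select an increasing sequence $v_{n}\to\infty$ with
\[
\Psi\Big(\frac{v_{n}}{n}\Big)\ge 4^{\,n}\,\Phi(v_{n})\qquad(n\ge 1);
\]
this is possible because $\limsup_{x\to\infty}\Psi(x)/\Phi(nx)=\infty$ yields $x_{n}$ with $\Psi(x_{n})>4^{n}\Phi(nx_{n})$, and one sets $v_{n}=nx_{n}$. Since $v_{n}\to\infty$, $\Phi(v_{n})$ is finite and eventually positive, so I may carve out (by non-atomicity, after passing if necessary to a non-atomic subset of finite measure, using that such a measure assumes every intermediate value) disjoint $E_{n}\subseteq E$ with $\mu(E_{n})=s_{n}:=2^{-n}/\Phi(v_{n})$. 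Setting $f=\sum_{n}v_{n}\chi_{E_{n}}$ gives
\[
\int_{E}\Phi(f)\,d\mu=\sum_{n}\Phi(v_{n})\,s_{n}=\sum_{n}2^{-n}<\infty,
\]
so $f\in L^{\Phi}(\Sigma)$. Now fixing $k>0$, for every $n\ge 1/k$ one has $k\ge 1/n$, so monotonicity of $\Psi$ and the displayed bound yield $\Psi(kv_{n})\ge\Psi(v_{n}/n)\ge 4^{n}\Phi(v_{n})$, whence
\[
\int_{E}\Psi(kf)\,d\mu=\sum_{n}\Psi(kv_{n})\,s_{n}\ \ge\ \sum_{n\ge 1/k}4^{n}\Phi(v_{n})\,\frac{2^{-n}}{\Phi(v_{n})}=\sum_{n\ge 1/k}2^{n}=\infty .
\]

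I expect the main obstacle to be the requirement that $\int_{E}\Psi(kf)\,d\mu$ diverge \emph{simultaneously for all} $k>0$, not merely for $k=1$: a naive choice of $v_{n}$ controlling the ratio $\Psi(v_{n})/\Phi(v_{n})$ is destroyed once the argument of $\Psi$ is shrunk by a small factor $k$, since $\Psi(kv_{n})$ can be far smaller than $\Psi(v_{n})$. The device that overcomes this is to invoke the claim at the \emph{growing} scale $a=n$, so that the key inequality is phrased through $\Psi(v_{n}/n)$; then for any fixed $k$ the tail indices $n\ge 1/k$ automatically satisfy $kv_{n}\ge v_{n}/n$, and monotonicity of $\Psi$ transfers the lower bound to $\Psi(kv_{n})$. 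The only secondary points to verify are the finiteness and positivity of $\Phi(v_{n})$ (secured by $b_{\Phi}=\infty$ and $v_{n}>a_{\Phi}$) and the existence of the sets $E_{n}$ inside $E$, both of which are routine.
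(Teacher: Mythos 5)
Your proposal is correct, and it is essentially the same argument as the standard one: the paper itself does not prove this lemma (it cites \cite{ceh}), but the construction there, and in the paper's own proof of Theorem \ref{p0}, is exactly of your type --- a step function $f=\sum_n v_n\chi_{E_n}$ on pairwise disjoint non-atomic pieces of $E$, with the comparison between $\Psi$ and $\Phi$ made at a scale growing with $n$ (your $\Psi(v_n/n)\geq 4^n\Phi(v_n)$ versus their $\Psi(y_n)>\Phi(2^n n^3 y_n)$) precisely so that every fixed dilation $k>0$ is eventually dominated, and with the measures $\mu(E_n)$ tuned so that the $\Phi$-modular converges while the $\Psi$-modular diverges. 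One small point to make explicit: when carving out the sets $E_n$ you should also require the $v_n$ to be large enough (possible, since the witnesses furnished by your limsup claim are arbitrarily large and $\Phi(x)\to\infty$) that $\sum_n 2^{-n}/\Phi(v_n)$ does not exceed the measure of the finite-measure non-atomic subset of $E$ you pass to.
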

For a sub-$\sigma$-finite algebra $\mathcal{A}\subseteq\Sigma$, the
conditional expectation operator associated with $\mathcal{A}$ is
the mapping $f\rightarrow E^{\mathcal{A}}f$, defined for all
non-negative, measurable function $f$ as well as for all $f\in
L^1(\Sigma)$ and $f\in L^{\infty}(\Sigma)$, where
$E^{\mathcal{A}}f$, by the Radon-Nikodym theorem, is the unique
$\mathcal{A}$-measurable function satisfying
$$\int_{A}fd\mu=\int_{A}E^{\mathcal{A}}fd\mu, \ \ \ \forall A\in \mathcal{A} .$$
As an operator on $L^{1}({\Sigma})$ and $L^{\infty}(\Sigma)$,
$E^{\mathcal{A}}$ is idempotent and
$E^{\mathcal{A}}(L^{\infty}(\Sigma))=L^{\infty}(\mathcal{A})$ and
$E^{\mathcal{A}}(L^1(\Sigma))=L^1(\mathcal{A})$. Thus it can be
defined on all interpolation spaces of $L^1$ and $L^{\infty}$ such
as, Orlicz spaces \cite{besh}. If there is no possibility of
confusion, we write $E(f)$ in place of $E^{\mathcal{A}}(f)$. This
operator will play a major role in our work and we list here some
of its useful properties:

\vspace*{0.2cm} \noindent $\bullet$ \  If $g$ is
$\mathcal{A}$-measurable, then $E(fg)=E(f)g$.

\noindent $\bullet$ \ $\varphi(E(f))\leq E(\varphi(f))$, where
$\varphi$ is a convex function.

\noindent $\bullet$ \ If $f\geq 0$, then $E(f)\geq 0$; if $f>0$,
then $E(f)>0$.

\noindent $\bullet$ \ For each $f\geq 0$, $S(f)\subseteq S(E(f))$,
where  $S(f)=\{x\in X; f(x)\neq 0\}$.\\
A detailed discussion and verification of
most of these properties may be found in \cite{rao}.

Let $f\in L^{\Phi}(\Sigma)$.  It is not difficult to see that $\Phi(E(f))\leq
E(\Phi(f))$ and so by some elementary computations we get that $N_{\Phi}(E(f))\leq N_{\Phi}(f)$ i.e, $E$ is a
contraction on the Orlicz spaces.
As we defined in \cite{ye}, we say that the pair $(E, \Phi)$ satisfies the generalized conditional-type
H\"{o}lder-inequality (or briefly GCH-inequality) if there exists some positive constant $C$
such that for all $f\in L^{\Phi}(\Omega, \Sigma, \mu)$ and $g\in
L^{\Psi}(\Omega, \Sigma, \mu)$ we have
$$E(|fg|)\leq C \Phi^{-1}(E(\Phi(|f|)))\Phi^{*^{-1}}(E(\Phi^{*}(|g|))),$$
where $\Psi$ is the complementary Young function of $\Phi$. There are many examples of the pair $(E, \Phi)$ that satisfy GCH-inequality in \cite{ye}.

Finally in the following we give another key lemma that is important in our investigation. The proof is an easy exercise.
\begin{lem}\label{l1n} If $\Phi$ is a Young's function and $f$ is a $\Sigma$-measurable function such that $E(f)$ and $E(\Phi(f))$ are defined, then $S(E(f))=S(E(\Phi(f)))$.
\end{lem}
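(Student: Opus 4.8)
The plan is to reduce the whole statement to the defining identity $\int_A g\,d\mu=\int_A E(g)\,d\mu$ for $\mathcal{A}$-measurable sets $A$, combined with the positivity of $E$ on nonnegative functions. Since $\Phi$ is even we have $\Phi(f)=\Phi(|f|)$, hence $E(\Phi(f))=E(\Phi(|f|))$, so there is no loss in taking $f\geq0$ from the outset (this is the natural reading of the statement; the signed case is governed by $|f|$). Both $E(f)$ and $E(\Phi(f))$ are then nonnegative and $\mathcal{A}$-measurable, so their supports are, up to $\mu$-null sets, the complements of the $\mathcal{A}$-measurable zero sets $\{E(f)=0\}$ and $\{E(\Phi(f))=0\}$. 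Thus it suffices to show that these two zero sets coincide almost everywhere, which I would do by a double inclusion.

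For $\{E(f)=0\}\subseteq\{E(\Phi(f))=0\}$ I would set $M:=\{E(f)=0\}$, which lies in $\mathcal{A}$. Since $f\geq0$, the identity $\int_M f\,d\mu=\int_M E(f)\,d\mu=0$ forces $f=0$ $\mu$-a.e. on $M$; because $\Phi(0)=0$ this gives $\Phi(f)=0$ a.e. on $M$, whence $\int_M E(\Phi(f))\,d\mu=\int_M\Phi(f)\,d\mu=0$, and nonnegativity of $E(\Phi(f))$ yields $E(\Phi(f))=0$ a.e. on $M$. The reverse inclusion runs symmetrically: with $N:=\{E(\Phi(f))=0\}\in\mathcal{A}$ one obtains $\Phi(f)=0$ a.e. on $N$ from $\int_N\Phi(f)\,d\mu=0$, then $f=0$ a.e. on $N$, and finally $E(f)=0$ a.e. on $N$. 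Taking complements of the (a.e.) equal zero sets gives $S(E(f))=S(E(\Phi(f)))$.

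The one genuine point to watch — and where I expect the only real obstacle to hide — is the step $\Phi(f)=0\Rightarrow f=0$ used in the reverse inclusion. It relies on $\Phi$ vanishing only at zero, i.e. $a_\Phi=0$; for a general Young function with $a_\Phi>0$ the implication fails on the set $\{0<|f|\leq a_\Phi\}$ and the conclusion can break down there. Since throughout the paper $\Phi$ is taken to vanish only at zero (as is already needed for $\Phi^{-1}$ to be a genuine inverse and for the GCH-inequality), this costs nothing here, but it is the hypothesis the argument truly rests on. I would also record explicitly that the identity is one about $|f|$: the literal object $S(E(f))$ for signed $f$ is sensitive to cancellation across the fibers of $E$, so the clean statement is $S(E(|f|))=S(E(\Phi(f)))$.
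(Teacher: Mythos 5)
The paper gives no proof of this lemma at all---it is dismissed with ``The proof is an easy exercise''---so there is nothing to compare your argument against; your proof is correct and is surely the intended one: reduce the supports of the nonnegative $\mathcal{A}$-measurable functions $E(f)$ and $E(\Phi(f))$ to their zero sets, and run both inclusions through the defining identity $\int_A g\,d\mu=\int_A E(g)\,d\mu$ together with positivity of the conditional expectation. What you add beyond the ``exercise'' is genuinely important: both of your caveats are necessary, not cosmetic. For signed $f$ the statement is false: on a probability space with $\mathcal{A}=\{\emptyset,X\}$ and $f=\chi_A-\chi_{X\setminus A}$ with $\mu(A)=\tfrac12$, one has $E(f)=0$ while $E(\Phi(f))=\Phi(1)>0$, so $S(E(f))=\emptyset\neq X=S(E(\Phi(f)))$. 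And for a Young function with $a_\Phi>0$ it is also false: $f\equiv a_\Phi$ gives $E(f)=a_\Phi>0$ everywhere while $E(\Phi(f))=0$. So the lemma holds only when read for $|f|$ (equivalently $f\geq 0$) and for $\Phi$ vanishing only at zero; note that in the corollary following Theorem \ref{t3} the paper applies the lemma to a function $u\in L^0(\Sigma)$ that is not assumed nonnegative, so your warning is relevant to the paper's own use of the result, not merely to its statement.
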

We keep the above notations throughout the paper.

\section{ \sc\bf  Boundedness of  the MCE operators}\label{trzy}

 In the section
 we state various  necessary conditions and sufficient conditions under which the MCE
 operator $EM_u$ between distinct Orlicz spaces is bounded. First we give a definition of MCE operators. 
 \begin{defn} Let $\Phi$, $\Psi$ be Young functions and $u:X\rightarrow \mathbb{C}$ be a measurable function on the measure space $(X,\Sigma,\mu)$. The multiplication conditional expectation operator (MCE operator) from $L^{\Phi}(\Sigma)$ into $L^{\Psi}(\Sigma)$ is defined by $EM_u(f)=E(uf)$ for every $f\in L^{\Phi}(\Sigma)$ such that $E(uf)\in L^{\Psi}(\Sigma)$.
 \end{defn}
 Here we find that there is no non-zero bounded MCE operator from $L^{\Phi}(\Sigma)$ into $L^{\Psi}(\mathcal{A})$,  when the measure space $(X,\mathcal{A},\mu)$ is non-atomic.
%

\begin{thm}\label{p0} Let $\Phi, \Psi$ be Young functions such that  $\Psi\mathrel{\overset{\makebox[0pt]{\mbox{\normalfont\scriptsize\sffamily $\ell$}}}{\nprec}}\Phi$ and let $(X,\mathcal{A},\mu)$ be  a non-atomic measure space. Then there are no  non-zero bounded MCE operators $EM_u$ from $L^{\Phi}(\Sigma)$ into $L^{\Psi}(\mathcal{A})$.
\end{thm}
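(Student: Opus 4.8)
The plan is to prove the contrapositive in strong form: if $EM_u\colon L^{\Phi}(\Sigma)\to L^{\Psi}(\mathcal{A})$ is bounded, then necessarily $u=0$ $\mu$-a.e., so that $EM_u=0$. So suppose $EM_u$ is bounded (hence everywhere defined on $L^{\Phi}(\Sigma)$) with $\|EM_u(f)\|_{\Psi}\le C\|f\|_{\Phi}$, and assume toward a contradiction that $u\neq 0$ on a set of positive measure.

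The first and key step is to feed the operator functions tailored to $u$, so as to turn $EM_u$ into an honest multiplication operator. Let $\sigma$ be the $\Sigma$-measurable function equal to $\overline{u}/|u|$ on $S(u)$ and $0$ elsewhere, so that $u\sigma=|u|$ and $|\sigma|\le 1$. For any $\mathcal{A}$-measurable $g\in L^{\Phi}(\Sigma)$ we have $\sigma g\in L^{\Phi}(\Sigma)$ with $\|\sigma g\|_{\Phi}\le\|g\|_{\Phi}$ (since $|\sigma g|\le|g|$ and the Orlicz norm is solid), and, using the listed property $E(hg)=g\,E(h)$ for $\mathcal{A}$-measurable $g$,
\[
EM_u(\sigma g)=E(u\sigma g)=E(|u|\,g)=g\,E(|u|).
\]
Hence boundedness gives $\|g\,E(|u|)\|_{\Psi}\le C\|g\|_{\Phi}$ for every $\mathcal{A}$-measurable $g\in L^{\Phi}$; that is, multiplication by the nonnegative $\mathcal{A}$-measurable weight $w:=E(|u|)$ is a bounded map $L^{\Phi}(\mathcal{A})\to L^{\Psi}(\mathcal{A})$. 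I single out $|u|$ rather than $u$ deliberately: testing only on $\mathcal{A}$-measurable $f$ would control merely $E(u)$, whose vanishing need not force $u=0$ because of cancellation, whereas $E(|u|)=0$ does.

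The second step shows this weight must vanish. Assume $w\neq 0$ on a set of positive measure. Then for some $\delta>0$ the set $F:=\{w\ge\delta\}\in\mathcal{A}$ has positive measure, and since $(X,\mathcal{A},\mu)$ is non-atomic, $F$ is a non-atomic measurable set of positive measure. Applying Lemma \ref{p1} to the measure space $(X,\mathcal{A},\mu)$ (the lemma is a general measure-theoretic fact and holds verbatim with $\mathcal{A}$ in the role of $\Sigma$), the hypothesis on the pair $(\Phi,\Psi)$, which is precisely the one required there, furnishes $g\in L^{\Phi}(\mathcal{A})$ with $g|_{F}\notin L^{\Psi}(F)$. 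On $F$ we have $|wg|\ge\delta|g|$; since $\delta g|_{F}\notin L^{\Psi}(F)$ and $L^{\Psi}$ is solid, this forces $wg|_{F}\notin L^{\Psi}(F)$, so $wg\notin L^{\Psi}(\mathcal{A})$. This contradicts $\|wg\|_{\Psi}\le C\|g\|_{\Phi}<\infty$. Therefore $w=E(|u|)=0$ $\mu$-a.e.

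Finally, $E(|u|)=0$ a.e. forces $u=0$ a.e.: by the support property $S(|u|)\subseteq S(E(|u|))$ (the listed property $S(f)\subseteq S(E(f))$ for $f\ge0$, equivalently Lemma \ref{l1n}), the support of $u$ is null. Hence $E(uf)=0$ for all $f$ and $EM_u=0$, contradicting non-zeroness. I expect the only genuinely delicate point to be the first step, namely recognizing that one should precompose with the unimodular factor $\sigma=\overline{u}/|u|$ to convert $EM_u$ into multiplication by $E(|u|)$; once that reduction is made, the argument is a routine application of Lemma \ref{p1} to the sub-algebra $\mathcal{A}$ combined with solidity of the Orlicz norm.
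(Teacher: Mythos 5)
Your proof is correct, but it takes a genuinely different route from the paper's. The paper argues by direct construction: from the hypothesis that $\Psi$ is not weaker than $\Phi$ at $\infty$ it extracts a sequence $\{y_n\}$ with $\Psi(y_n)>\Phi(2^n n^3 y_n)$, uses non-atomicity of $(X,\mathcal{A},\mu)$ to choose pairwise disjoint $\mathcal{A}$-measurable sets $F_n\subset E_n:=\{E(u)>1/n\}$ with calibrated measures $\mu(F_n)=\Phi(y_1)\mu(F)/(2^n\Phi(n^3y_n))$, and verifies by hand that $f=\sum_n n^2y_n\chi_{F_n}$ lies in $L^{\Phi}(\mathcal{A})$ while $I_{\Psi}(\alpha\, E(u)f)=\infty$; in effect it re-proves the content of Lemma \ref{p1} inline for the sub-algebra. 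You instead precompose with the polar factor $\sigma=\bar{u}/|u|$ to convert $EM_u$ into the multiplication operator by $w=E(|u|)$ acting on $L^{\Phi}(\mathcal{A})$, and then cite Lemma \ref{p1} applied to the measure space $(X,\mathcal{A},\mu)$ together with solidity of the Orlicz norm. Your route is shorter, and in one respect more careful: the paper's step ``since $EM_u$ is non-zero, $\mu(E_m)>0$ for some $m$'' is only justified when $E(u)>0$ on a set of positive measure, which for complex or sign-changing $u$ need not follow from $EM_u\neq0$ (cancellation can make $E(u)=0$ a.e.\ while $E(u\,\cdot)$ is non-zero on non-$\mathcal{A}$-measurable inputs); your deliberate passage to $E(|u|)$ via the $\sigma$-trick closes exactly this gap and also yields the stronger conclusion $u=0$ a.e., recovering the paper's parenthetical remark that $EM_u$ cannot even act between these spaces unless it is zero. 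What the paper's argument buys in exchange is self-containedness, since it never invokes Lemma \ref{p1}. Two minor caveats in your write-up, both at the same level of rigor as the paper itself: applying Lemma \ref{p1} to $(X,\mathcal{A},\mu)$ uses that $\mathcal{A}$ is a sub-$\sigma$-finite algebra (a standing assumption), and the identity $E(|u|g)=g\,E(|u|)$ for $\mathcal{A}$-measurable $g$ should be read with the usual conventions for conditional expectations of non-negative functions that may take infinite values.
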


\begin{proof} Suppose, to the contrary, that  $EM_{u}$ is a non-zero bounded linear operator from $L^{\Phi}(X)$ into $L^{\Psi}(X)$ and let
$$E_n=\left\{x\in X:E(u)(x)>\frac{1}{n}\right\}.$$
Then  $\{E_n\}_{n\in \mathbb{N}}$ is an increasing sequence of $\mathcal{A}$-measurable sets. Since $EM_{u}$ is non-zero, $\mu(E_m)>0$ for some $m\in \mathbb{N}$, whence also $\mu(E_n)>0$ for all $n\geq m$. We assume without loss of generality that $\mu(E_n)>0$ for all $n\in \mathbb{N}$. Let $F\subset E:=\bigcup_n E_n$ and $0<\mu(F)<\infty$. The assumption  that $\Psi\mathrel{\overset{\makebox[0pt]{\mbox{\normalfont\scriptsize\sffamily $\ell$}}}{\nprec}}\Phi$ implies that an increasing sequence of positive numbers $\{y_n\}$ can be found such that $\Psi(y_n)>\Phi(2^nn^3y_n)$. Since the measure space $(X,\mathcal{A},\mu)$ is non-atomic, we can choose a sequence $\{F_n\}$ of pairwise disjoint measurable subsets of $F$ such that $F_n\subset E_n$ and
$\mu(F_n)=\frac{\Phi(y_1)\mu(F)}{2^n\Phi(n^3y_n)}$. This is possible because $\Phi(n^3y_n)\geq \Phi(y_n)\geq \Phi(y_1)$, and so $\mu(F_n)\leq \frac{\mu(F)}{2^n}$, and $\sum^\infty_{n=1}\frac{\mu(F)}{2^n}=\mu(F)$.

 Define  the function $\displaystyle f:=\sum^\infty_{n=1}b_n\chi_{F_n}$, where $b_n:=n^2y_n$, and take  arbitrary  $\alpha>0$. Then for a natural number  $n_0>\alpha$ we have
 \begin{align*}
I_{\Phi}(\alpha f)&=\int_X\Phi(\alpha f)d\mu=\displaystyle\sum^{\infty}_{n=1}\int_X\Phi(\alpha\, b_n)\chi_{F_n} d\mu\\
&\leq\sum^{n_0}_{n=1}\Phi(\alpha\,b_n)\mu(F_n)+\mu(F)\sum_{n> n_0}\frac{\Phi(n^3y_n)\Phi( y_1)}{2^n\Phi(n^3 y_n)}<\infty.
\end{align*}
This implies that $f\in L^{\Phi}(\mathcal{A})$. But for  $m_0>0$ such that $\frac{1}{m_0}<\alpha$, we obtain
\begin{align*}
I_{\Psi}(\alpha EM_{u}f)&=\int_X\Psi(\alpha EM_{u}f)d\mu=\sum\limits_{n\geq0}\int_{F_n} \Psi(\alpha E(u)f)d\mu\\
&\geq\sum_{n\geq m_0}\frac{1}{n}2^n\Phi(n^3 y_n)\mu(F_n)\geq\mu(F)\sum_{n\geq m_0}\frac{1}{n}\Phi(y_1)=\infty,
\end{align*}
which contradicts the boundedness of $EM_u$. (In fact, we even proved that $EM_u$ does not act from $L^{\Phi}(\Sigma)$ into $L^{\Psi}(\mathcal{A})$).
\end{proof}
Now we provide some necessary conditions for the boundedness of the MCE
operator $EM_u$ from $L^{\Phi}(\Sigma)$ into $L^{\Psi}(\Sigma)$ in the case $\Phi(xy)\leq\Psi(x)+\Theta(y)$
for some Young function $\Theta$ and for all $x,y\geq 0$.

\begin{thm}\label{t3}  Let $\Phi,\Psi, \Theta$ be Young functions as are defined in the first section, and such
that $\Phi(xy)\leq\Psi(x)+\Theta(y)$ for all $x,y\geq 0$. If $u\in L^0(\Sigma)$  induces a bounded MCE
operator $EM_u:L^{\Phi}(\Sigma)\rightarrow L^{\Psi}(\Sigma)$, then
\begin{enumerate}

\item[(i)] $E(u)=0$, \ $\mu$-a.e. on $B$, the non-atomic part of $X$;

\item[(ii)] $\sup\limits_{n\in \mathbb{N}}E(u)(A_n)\,\Theta^{-1}(\frac{1}{\mu(A_n)})<\infty$.
\end{enumerate}
\end{thm}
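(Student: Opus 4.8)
The plan is to handle the non-atomic part and the atomic part of the space separately, and in both to exploit the pull-out identity $E(uf)=E(u)\,f$ for $\mathcal{A}$-measurable $f$, so that on such test functions $EM_u$ acts simply as multiplication by the $\mathcal{A}$-measurable function $E(u)$. Since $E(u)$ and every value $EM_u f$ are $\mathcal{A}$-measurable, it is the atoms and the non-atomic part of $(X,\mathcal{A},\mu)$ that are relevant, and I take $\{A_n\}$ and $B$ to be these $\mathcal{A}$-objects. As a preliminary step I record, via Lemma \ref{l2}, that the hypothesis $\Phi(xy)\le\Psi(x)+\Theta(y)$ forces $\Psi\nprec_{\ell}\Phi$; this is exactly the relation needed to invoke Lemma \ref{p1} and to run the construction of Theorem \ref{p0}.

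For (i) I would argue by contradiction. If $E(u)\neq 0$ on a subset of $B$ of positive measure, then $\abs{E(u)}>\tfrac1m$ on an $\mathcal{A}$-measurable subset of $B$ of positive measure for some $m$, and I reproduce the argument of Theorem \ref{p0} localized to $B$. Using $\Psi\nprec_{\ell}\Phi$ I choose an increasing sequence $y_n$ with $\Psi(y_n)>\Phi(2^n n^3 y_n)$, and using the non-atomicity of $B$ I select pairwise disjoint $\mathcal{A}$-measurable sets $F_n\subseteq\{\abs{E(u)}>\tfrac1n\}\cap B$ with the same prescribed small measures as in that proof. The function $f=\sum_n n^2y_n\,\chi_{F_n}$ then lies in $L^{\Phi}(\Sigma)$, while, because $f$ is $\mathcal{A}$-measurable and $\abs{E(u)}>\tfrac1n$ on $F_n$, one has $\abs{EM_u f}=\abs{E(u)}\,f$ and $\int_X\Psi(\alpha\,EM_u f)\,d\mu=\infty$ for every $\alpha>0$, contradicting boundedness. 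Thus $E(u)=0$ $\mu$-a.e.\ on $B$.

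For (ii) I would test boundedness against the characteristic functions of the atoms. For each $n$ the function $\chi_{A_n}$ is $\mathcal{A}$-measurable, so $EM_u\chi_{A_n}=E(u)\chi_{A_n}=E(u)(A_n)\,\chi_{A_n}$, and the elementary identity $\norm{\chi_A}_\Phi=1/\Phi^{-1}(1/\mu(A))$ gives, with $M=\norm{EM_u}$,
\[
\frac{\abs{E(u)(A_n)}}{\Psi^{-1}(1/\mu(A_n))}=\norm{EM_u\chi_{A_n}}_\Psi\le M\,\norm{\chi_{A_n}}_\Phi=\frac{M}{\Phi^{-1}(1/\mu(A_n))}.
\]
The key analytic step is to rewrite the hypothesis in terms of generalized inverses: putting $x=\Psi^{-1}(s)$, $y=\Theta^{-1}(t)$ in $\Phi(xy)\le\Psi(x)+\Theta(y)$ and using that $\Phi,\Psi,\Theta$ are $N$-functions (so that $\Phi(\Phi^{-1}(\cdot))$ is the identity, and likewise for $\Psi,\Theta$) yields $\Psi^{-1}(s)\Theta^{-1}(t)\le\Phi^{-1}(s+t)$; taking $s=t$ and using the concavity of $\Phi^{-1}$ (whence $\Phi^{-1}(2t)\le 2\Phi^{-1}(t)$) gives $\Psi^{-1}(t)\Theta^{-1}(t)\le 2\Phi^{-1}(t)$. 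Multiplying the displayed bound by $\Theta^{-1}(1/\mu(A_n))$ and applying this with $t=1/\mu(A_n)$ collapses the right-hand side to $2M$, so $\sup_n\abs{E(u)(A_n)}\,\Theta^{-1}(1/\mu(A_n))\le 2M<\infty$.

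I expect the main obstacle to lie in (i): making the Theorem \ref{p0}-type construction go through with $\mathcal{A}$-measurable pieces inside $B$, i.e.\ verifying that the non-atomicity of $B$ is genuinely available at the level of $\mathcal{A}$ and that the pull-out identity lets the lower bound on $\Psi(\alpha\,EM_u f)$ be driven entirely by $E(u)$ rather than by $u$ itself. In (ii) the only delicate point is the passage to inverse functions together with the concavity of $\Phi^{-1}$, both of which are routine once the hypothesis has been put in inverse form.
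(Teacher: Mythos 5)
Your proposal is correct and follows essentially the same route as the paper: part (i) is the same contradiction argument on a positive-measure subset of the non-atomic part of $(X,\mathcal{A},\mu)$ using the pull-out identity $E(uf)=E(u)f$ (the paper simply cites Lemma~\ref{p1} where you inline the Theorem~\ref{p0} construction, which is the same content), and part (ii) is the same test-function argument on normalized atom indicators combined with the inequality $\Psi^{-1}(x)\Theta^{-1}(x)\leq 2\Phi^{-1}(x)$, which you derive from the hypothesis exactly as the paper does via Lemma~\ref{l2}. No gaps.
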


\begin{proof}
  Suppose that $EM_u$ is bounded. First we prove (i). If $\mu\{x\in B:E(u)(x)\neq0\}>0$,
  then we can find a constant $\delta>0$ such that for $F=\{x\in B:E(u)(x)>\delta\}$  we have $\mu(F)>0$. Since $F$ is non-atomic $\mathcal{A}$-measurable set, $\mu(F)>0$, and $\Phi(xy)\leq\Psi(x)+\Theta(y)$ for all $x,y\geq 0$, by Lemmas \ref{l2} and \ref{p1} we have that there exists $f\in L^{\Phi}(\mathcal{A})$
   such that $f_{|_F}\notin L^{\Psi}(F, \mathcal{A}_F)$, and so
\begin{align*}
\infty&=\int_F\Psi\left(\frac{\delta f}{\| EM_uf\|_{\Psi}}\right)d\mu\\
&\leq\int_X\Psi\left(\frac{E(u)f}{\| EM_uf\|_{\Psi}}\right)d\mu=\int_X\Psi\left(\frac{E(uf)}{\| EM_uf\|_{\Psi}}\right)d\mu\leq1,
\end{align*}
which is a contradiction. Thus (i) holds.

Now we prove (ii). We may assume that the function $u$ is not identically zero. For each $n\in \mathbb{N}$, put $f_n=\Phi^{-1}(\frac{1}{\mu(A_n)})\chi_{A_n}$. It is clear that $f_n\in L^{\Phi}(\Sigma)$ and $I_{\Phi}(f_n)=1$, whence $\|f_n\|_{\Phi}=1$. Since the operator $M_u$ is bounded, we have
\begin{align*}
1\geq\int_X\Psi\left(\frac{E(u.f_n)}{\| E(u.f_n)\|_{\Psi}}\right)d\mu &=\int_{A_n}\Psi\left(\frac{E(u)\Phi^{-1}(\frac{1}{\mu(A_n)})}{\| E(u.f_n)\|_{\Psi}}\right)d\mu\\
&=\Psi\left(\frac{E(u)(A_n)\Phi^{-1}(\frac{1}{\mu(A_n)})}{\| E(u.f_n)\|_{\Psi}}\right)\mu(A_n).
\end{align*}
Therefore
\begin{align}\label{t34p}
\frac{E(u)(A_n)\Phi^{-1}(\frac{1}{\mu(A_n)})}{\Psi^{-1}(1/\mu(A_n))}\leq\|E(u.f_n)\|_{\Psi}.
\end{align}
Plug $x=1/\mu(A_n)$ in the inequality
$\Psi^{-1}(x)\Theta^{-1}(x)\leq 2\Phi^{-1}(x)$
derived in lines 2-3 of the proof of Lemma \ref{l2} and use inequality (\ref{t34p}) to obtain

\begin{align*}
E(u)(A_n)\,\Theta^{-1}(\frac{1}{\mu(A_n)})&\leq E(u)(A_n)\frac{2\Phi^{-1}(1/\mu(A_n))}{\Psi^{-1}(1/\mu(A_n))}\leq 2\| E(u.f_n)\|_{\Psi}\leq 2\|EM_u\|<\infty.
\end{align*}
This completes the proof.
\end{proof}
By using the Lemma \ref{l1n}  we have the next straightforward consequence.
\begin{cor} Under the assumptions of Theorem \ref{t3}, if $(X, \Sigma, \mu)$
is a non-atomic measure space, then the MCE operator $EM_u$ is bounded from
$L^{\Phi}(\Sigma)$ into $L^{\Psi}(\Sigma)$ if and only if $EM_u=0$.
\end{cor}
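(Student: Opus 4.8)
The plan is to split the stated equivalence, putting all the content into the forward implication. The reverse direction is immediate: the zero operator is trivially bounded. For the forward direction I would first use non-atomicity to collapse Theorem \ref{t3} to a single clean statement. Since the space has no atoms, the countable family $\{A_n\}$ of the Section 2 decomposition is empty and $X$ coincides with its non-atomic part $B$. Hence conclusion (ii) of Theorem \ref{t3} is vacuous, while conclusion (i) tells us that boundedness of $EM_u$ forces $E(u)=0$ $\mu$-a.e.\ on all of $X$. The whole problem is thereby reduced to passing from the vanishing of a conditional expectation to the vanishing of $u$ itself, which is exactly $EM_u=0$: if $u=0$ then $E(uf)=0$ for every $f$, and conversely $E(u\chi_A)=0$ for all $A\in\Sigma$ gives, on integrating over $X$, that $\int_A u\,d\mu=0$ for all $A$, whence $u=0$ a.e.

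The mechanism for the closing step is the one signalled in the paper. For a nonnegative measurable $g$ the listed property $S(g)\subseteq S(E(g))$ together with Lemma \ref{l1n}, which supplies $S(E(g))=S(E(\Phi(g)))$, shows that $E(g)=0$ a.e.\ implies $S(E(\Phi(g)))$ is $\mu$-null, hence $E(\Phi(g))=0$ a.e.; then $\Phi(g)\geq0$ and $S(\Phi(g))\subseteq S(E(\Phi(g)))$ force $\Phi(g)=0$ a.e., so $g=0$ a.e.\ because the $N$-function $\Phi$ vanishes only at the origin. Applied to $g=\abs{u}$, this yields $u=0$, and therefore $EM_u=0$, the moment we know that $E(\abs{u})=0$ $\mu$-a.e.

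The main obstacle is precisely that Theorem \ref{t3}(i) delivers $E(u)=0$, whereas the support machinery above needs $E(\abs{u})=0$: for signed or complex $u$ these differ, since $E(u)$ can vanish through cancellation on the fibres of $E$ while $E(\abs{u})$ does not, so the naive application of Lemma \ref{l1n} to the non-sign-definite $u$ is not legitimate. (When $u$ is one-signed the two coincide and there is nothing to do.) I would bridge this by reinforcing the extraction in Theorem \ref{t3}(i) rather than by applying Lemma \ref{l1n} to $u$ directly. Concretely, since the hypothesis $\Phi(xy)\leq\Psi(x)+\Theta(y)$ forces $\Psi\mathrel{\overset{\makebox[0pt]{\mbox{\normalfont\scriptsize\sffamily $\ell$}}}{\nprec}}\Phi$ by Lemma \ref{l2}, Lemma \ref{p1} produces, on any non-atomic $\A$-measurable set $F$ where $E(\abs{u})>\delta$, a nonnegative $\A$-measurable $g\in L^{\Phi}$ with $g_{|_F}\notin L^{\Psi}(F)$; feeding the sign-adjusted test function $f=(\bar u/\abs{u})\,g$, so that $\abs{f}=\abs{g}$ and $E(uf)=g\,E(\abs{u})\geq\delta g$ on $F$, into the boundedness inequality reproduces the divergence of Theorem \ref{t3} and contradicts boundedness. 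This forces $E(\abs{u})=0$ a.e., after which the support argument of the previous paragraph closes the proof.
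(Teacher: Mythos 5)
Your proposal is correct, and at the crucial step it diverges from --- and in fact repairs --- the paper's own argument. The paper's proof is a single line: from Theorem \ref{t3}(i) one gets $E(u)=0$ $\mu$-a.e.\ (the whole space being its non-atomic part), and then the chain $S(E(uf))\subseteq S(E(u))=S(E(\Psi(u)))$ for all $f\in L^{\Phi}(\Sigma)$, the equality being Lemma \ref{l1n} applied to $u$, is supposed to give $EM_u=0$. That chain is legitimate only for sign-definite $u$, exactly as you observe: for $u$ taking both signs (or complex values, which the paper's definition of MCE operators allows), both the equality $S(E(u))=S(E(\Psi(u)))$ and the inclusion $S(E(uf))\subseteq S(E(u))$ can fail by cancellation --- e.g.\ with $X=[0,1]\times\{-1,1\}$, $\mathcal{A}$ the sets independent of the second coordinate, and $u(x,s)=s$, one has $E(u)=0$ yet $E(\Psi(u))=\Psi(1)>0$ and $E(u\cdot u)=1$. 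The valid chain is $S(E(uf))\subseteq S(E(\abs{u}))=S(E(\Psi(u)))$, so the input actually needed is $E(\abs{u})=0$, which Theorem \ref{t3}(i) does not literally provide; your rerun of its proof with sign-adjusted test functions $f=(\bar u/\abs{u})\,g$ ($g\geq 0$, $\mathcal{A}$-measurable, supplied by Lemmas \ref{l2} and \ref{p1}), giving $E(uf)=g\,E(\abs{u})\geq\delta g$ on $F$, is precisely the missing strengthening. So the skeleton is the same as the paper's (Theorem \ref{t3}(i) plus the support properties of $E$), but your version is the one that covers general $u$, while the paper's proof is complete only when $u\geq0$. Two minor points. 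First, once $E(\abs{u})=0$ is in hand, the listed property $S(\abs{u})\subseteq S(E(\abs{u}))$ already forces $u=0$ a.e., so your extra pass through Lemma \ref{l1n} and $\Phi$ is harmless but unnecessary. Second, you share the paper's gloss in reading non-atomicity of $(X,\Sigma,\mu)$ as making the atoms $A_n$ disappear: since Lemma \ref{p1} is applied inside $(X,\mathcal{A},\mu)$ (the test set $F$ and the function $g$ must be $\mathcal{A}$-measurable), what both arguments really require is non-atomicity of $(X,\mathcal{A},\mu)$; with trivial $\mathcal{A}$ over $[0,1]$ and $u\equiv1$, the operator $f\mapsto\int_0^1 f\,d\mu$ is a bounded non-zero MCE operator even though $(X,\Sigma,\mu)$ is non-atomic, so the hypothesis of the corollary should be read relative to $\mathcal{A}$.
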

\begin{proof}
Since $S(E(uf))\subseteq S(E(u))=S(E(\Psi(u)))$, for each $f\in L^{\Phi}(\Sigma)$, then we get the result.
\end{proof}
Now we present some sufficient conditions of the continuity of the operator $EM_u$ from one Orlicz space into another.
\begin{thm}\label{t44}
Let $\Phi, \Psi$ be Young functions such that
  $\Phi,\Psi\in\Delta'$,  $\Psi\circ\Phi^{-1}$ is a Young function and GCH inequality holds for the pair $(\Phi, \Phi^*)$.
  Then for $u\in L^0(\Sigma)$ the MCE operator $EM_u:L^{\Phi}(\Sigma)\rightarrow L^{\Psi}(\mathcal{A})$  if
\begin{enumerate}
\item[(i)] $E(\Phi^*(u))=0$  $\mu$-a.e. on $B$;\\
\item[(ii)] $M=\sup\limits_{n\in \mathbb{N}}\Psi\left[\frac{Cc_1\Phi^{*^{-1}}(E(\Phi^*(u)))(A_n)}
{\Phi^{-1}(\mu(A_n))}\right]\mu(A_n)<\infty$,
in which $C$ comes from the GCH- inequality and $c_1$ comes from the fact that $\Phi\in\Delta'$.
\end{enumerate}
\end{thm}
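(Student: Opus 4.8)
The plan is to establish boundedness directly from the GCH-inequality by bounding the $\Psi$-modular of $EM_u f$ for every $f$ with $\|f\|_\Phi\le 1$ by a constant independent of $f$; by homogeneity this yields $\|EM_u\|<\infty$ and simultaneously shows $EM_uf\in L^\Psi(\mathcal{A})$. First I would apply the GCH-inequality to the pair $(f,u)$ to get the pointwise estimate $|EM_uf|=|E(uf)|\le E(|uf|)\le C\,\Phi^{-1}(E(\Phi(|f|)))\,\Phi^{*^{-1}}(E(\Phi^{*}(|u|)))$. Since the target is $L^\Psi(\mathcal{A})$, the function $E(uf)$ is $\mathcal{A}$-measurable, so I would split $X$ into the non-atomic part $B$ and the atoms $\{A_n\}$ of $\mathcal{A}$. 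On $B$, hypothesis (i) gives $E(\Phi^{*}(u))=E(\Phi^{*}(|u|))=0$, hence the factor $\Phi^{*^{-1}}(E(\Phi^{*}(|u|)))$ equals $\Phi^{*^{-1}}(0)=0$ there, and the pointwise bound forces $EM_uf=0$ $\mu$-a.e.\ on $B$ (consistently, Lemma \ref{l1n} gives $S(E(u))=S(E(\Phi^{*}(u)))$, so $E(u)=0$ on $B$ as well). Thus only the atomic part contributes to the modular.

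Second, on each atom I would set $v_n:=\Phi^{*^{-1}}(E(\Phi^{*}(|u|)))(A_n)$ and $t_n:=\int_{A_n}\Phi(|f|)\,d\mu$, so that $E(\Phi(|f|))(A_n)=t_n/\mu(A_n)$ and the estimate reads $|E(uf)(A_n)|\le C\,v_n\,\Phi^{-1}(t_n/\mu(A_n))$. The key manipulation is to separate the $t_n$-dependence from the $\mu(A_n)$-dependence in the last factor. Writing $w:=\Phi^{-1}(t_n/\mu(A_n))$ and using $\Phi\in\Delta'$ (constant $c_1\ge 1$) in the form $\Phi(w)\,\mu(A_n)=\Phi(w)\,\Phi(\Phi^{-1}(\mu(A_n)))\ge \tfrac{1}{c_1}\Phi\big(w\,\Phi^{-1}(\mu(A_n))\big)$, together with $\Phi(w)\mu(A_n)=t_n$, I obtain $w\,\Phi^{-1}(\mu(A_n))\le \Phi^{-1}(c_1 t_n)$; the concavity of $\Phi^{-1}$ then gives $\Phi^{-1}(c_1t_n)\le c_1\Phi^{-1}(t_n)$, so $\Phi^{-1}(t_n/\mu(A_n))\le c_1\Phi^{-1}(t_n)/\Phi^{-1}(\mu(A_n))$. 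Consequently $|E(uf)(A_n)|\le a_n\,\Phi^{-1}(t_n)$, where $a_n:=Cc_1 v_n/\Phi^{-1}(\mu(A_n))$ is precisely the argument of $\Psi$ appearing in hypothesis (ii).

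Finally I would estimate the modular at scale $K$. Summing over atoms and using $\Psi\in\Delta'$ (constant $c_2$) to factor $\Psi(a_n\Phi^{-1}(t_n)/K)\le c_2\,\Psi(a_n)\,\Psi(\Phi^{-1}(t_n)/K)$, then the convexity of $\Psi$ to pull out $1/K$ (for $K\ge 1$), I reach $I_\Psi(EM_uf/K)\le \tfrac{c_2}{K}\sum_n \Psi(a_n)\,\mu(A_n)\,(\Psi\circ\Phi^{-1})(t_n)\le \tfrac{c_2 M}{K}\sum_n (\Psi\circ\Phi^{-1})(t_n)$, where the last step invokes the supremum $M$ from (ii). The decisive point is that $\Psi\circ\Phi^{-1}$ is a Young function, hence superadditive with value $0$ at $0$, so $\sum_n (\Psi\circ\Phi^{-1})(t_n)\le (\Psi\circ\Phi^{-1})(\sum_n t_n)\le (\Psi\circ\Phi^{-1})(1)$ because $\sum_n t_n\le \int_X\Phi(|f|)\,d\mu\le 1$. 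Choosing $K=\max\{1,\,c_2 M\,(\Psi\circ\Phi^{-1})(1)\}$ then yields $I_\Psi(EM_uf/K)\le 1$, i.e.\ $\|EM_uf\|_\Psi\le K$, proving that $EM_u$ is bounded.

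I expect the main obstacle to be the second step: correctly combining the two $\Delta'$-conditions with the concavity of $\Phi^{-1}$ so that splitting $\Phi^{-1}(t_n/\mu(A_n))$ reproduces exactly the $\mu(A_n)$-quantity occurring in (ii) while leaving a residual factor $\Phi^{-1}(t_n)$ that is summable. Keeping every inequality pointing the right way, and verifying that superadditivity of $\Psi\circ\Phi^{-1}$ converts the mere supremum in (ii) into a finite modular bound, is the crux of the argument; the remaining modular bookkeeping is routine.
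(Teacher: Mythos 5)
Your proposal is correct and follows essentially the same route as the paper's proof: the GCH pointwise bound, reduction to the atoms via (i), the $\Delta'$-condition on $\Phi$ to split $\Phi^{-1}\bigl(t_n/\mu(A_n)\bigr)\leq c_1\Phi^{-1}(t_n)/\Phi^{-1}(\mu(A_n))$, the $\Delta'$-condition on $\Psi$ to extract the quantity bounded by $M$ in (ii), and superadditivity of $\Psi\circ\Phi^{-1}$ to control the sum by $(\Psi\circ\Phi^{-1})(1)$. The only differences are presentational: you justify the inverse-function form of the $\Delta'$-inequality explicitly (the paper writes that step as an equality, though it is really an inequality) and you build the normalizing constant $K$ into the modular from the start, whereas the paper bounds $I_\Psi(EM_uf)$ first and rescales at the end.
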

\begin{proof}
 Suppose that (i) and (ii) hold and set  $M:=\sup\limits_{n\in \mathbb{N}}\Psi\left(Cc_1\frac{\Phi^{*^{-1}}(E(\Phi^{*}(u)))(A_n)}{\Phi^{-1}(\mu(A_n))}\right)\mu(A_n) $. Then for each $f\in L^{\Phi}(\Sigma)$ with $\|f\|_{\Phi}\leq1$ we have
\begin{align*}
I_{\Phi_2}(EM_uf)&=\int_X\Psi(EM_uf)d\mu\\
&\leq\int_X \Psi(C\Phi^{*^{-1}}(E(\Phi^{*}(u)))\Phi^{-1}(E(\Phi(f))))d\mu\\
&=\int_{\bigcup_n A_n}\Psi(C\Phi^{*^{-1}}(E(\Phi^{*}(u)))\Phi^{-1}(E(\Phi(f))))d\mu\\
&=\sum\limits_{n\in \mathbb{N}}\Psi(C\Phi^{*^{-1}}(E(\Phi^{*}(u)))\Phi^{-1}(E(\Phi(f))))(A_n)\mu(A_n)\\
&=\sum\limits_{n\in \mathbb{N}}\Psi\left(C\frac{\Phi^{*^{-1}}(E(\Phi^{*}(u)))(A_n)}{\Phi^{-1}(\mu(A_n))}c_1\Phi^{-1}(\mu(A_n)E(\Phi(f))(A_n))\right)\mu(A_n)\\
&\leq c_2\sum\limits_{n\in \mathbb{N}}\Psi\left(Cc_1\frac{\Phi^{*^{-1}}(E(\Phi^{*}(u)))(A_n)}{\Phi^{-1}(\mu(A_n))}\right)\mu(A_n)\Psi\circ\Phi^{-1}(\mu(A_n)E(\Phi(f))(A_n))\\
&\leq c_2M\sum\limits_{n\in \mathbb{N}}\Psi\circ\Phi^{-1}(\mu(A_n)E(\Phi(f))(A_n))\\
&\leq c_2M\Psi\circ\Phi^{-1}(\sum\limits_{n\in \mathbb{N}}\mu(A_n)E(\Phi(f))(A_n))\\
&\leq c_2M\Psi\circ\Phi^{-1}(1),
\end{align*}
where $C,c_1,c_2$ come from from GCH-inequality and the fact that $\Phi, \Psi\in\Delta'$. Also we used the superadditivity of the convex function  $\Psi\circ\Phi^{-1}$ on the interval $[0,\infty)$. Therefore we have $\|EM_u f\|_{\Psi}\leq \max(c_2M\,\Psi\circ\Phi^{-1}(1),1)$, and so $EM_u$ is bounded.
\end{proof}
\begin{thm}\label{t36}
   Let $\Phi$ and $\Psi$ be Young functions an $EM_{u}:\mathcal{D}\subseteq L^{\Phi}(\Sigma)\rightarrow L^{\Psi}(\Sigma)$ be well defined. Then the followings hold:

 (i) Suppose that there exists a Young function $\Theta$ such that $\Phi^{-1}(x)\Theta^{-1}(x)\leq \Psi^{-1}(x)$ for $x\geq0$ and $(E,\Phi)$ satisfies GCH-inequality. In this case if $\Phi^{*^{-1}}(E(\Phi^*(u)))\in L^{\Theta}(\mathcal{A})$, then $EM_u$ from $L^{\Phi}(\Sigma)$ into $L^{\Psi}(\Sigma)$ is bounded.

 (ii) Let $\Theta=\Psi^*\circ\Phi^{*^{-1}}$ be a Young function, $\Theta\in \bigtriangleup_2$ and $\Phi^* \in \bigtriangleup_2$. In this case if $EM_u$ is bounded from $L^{\Phi}(\Sigma)$ into $L^{\Psi}(\Sigma)$, then $E(\Phi^*(\bar{u}))\in L^{\Theta^*}(\mathcal{A})$. Consequently $\Phi^{*^{-1}}(E(\Phi^*(\bar{u})))\in L^{\Theta^*\circ\Phi^*}(\mathcal{A})$.
\end{thm}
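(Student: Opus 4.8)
The plan is to treat the two parts by dual strategies: a pointwise-then-modular estimate built on the GCH-inequality for part (i), and an Orlicz-space duality argument for part (ii).

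For (i), I would start from the GCH-inequality applied to $f$ and $u$, which gives the pointwise bound
$$|EM_uf|=|E(uf)|\le E(|uf|)\le C\,\Phi^{-1}(E(\Phi(|f|)))\,\Phi^{*^{-1}}(E(\Phi^{*}(|u|))).$$
The first factor is harmless: since $\Phi(\Phi^{-1}(t))\le t$ and $E$ preserves integrals, one has $\int_X\Phi\big(\Phi^{-1}(E(\Phi(|f|)))\big)\,d\mu\le\int_X E(\Phi(|f|))\,d\mu=\int_X\Phi(|f|)\,d\mu$, so that $\|\Phi^{-1}(E(\Phi(|f|)))\|_{\Phi}\le\|f\|_{\Phi}$. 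The second factor is exactly the function assumed to lie in $L^{\Theta}(\mathcal{A})$. The key elementary step is to convert the hypothesis $\Phi^{-1}(x)\Theta^{-1}(x)\le\Psi^{-1}(x)$ into the Young-type inequality $\Psi(ab)\le\Phi(a)+\Theta(b)$ for all $a,b\ge0$: setting $s=\Phi(a)$, $t=\Theta(b)$ and using $a\le\Phi^{-1}(s)$, $b\le\Theta^{-1}(t)$ together with monotonicity of the generalized inverses, one gets $ab\le\Phi^{-1}(s)\Theta^{-1}(t)\le\Psi^{-1}(\max(s,t))$, whence $\Psi(ab)\le\max(s,t)\le s+t$. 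Feeding the two factors into this inequality, integrating, and using the convexity of $\Psi$ to absorb the resulting constant then yields $\|EM_uf\|_{\Psi}\le 2C\,\|\Phi^{*^{-1}}(E(\Phi^{*}(u)))\|_{\Theta}\,\|f\|_{\Phi}$, i.e. boundedness. I expect no real obstacle here.

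For (ii) the plan is to run this argument in reverse, by duality. Since $E(uf)$ is $\mathcal{A}$-measurable, H\"older's inequality in Orlicz spaces together with the bound $\|EM_uf\|_{\Psi}\le K\|f\|_{\Phi}$ gives, for every $\mathcal{A}$-measurable $g\ge0$, the adjoint-type estimate $\int_X u f g\,d\mu=\int_X E(uf)\,g\,d\mu\lesssim K\|f\|_{\Phi}\|g\|_{\Psi^{*}}$, and taking the supremum over $\|f\|_{\Phi}\le1$ yields $\|ug\|_{\Phi^{*}}\le 2K\|g\|_{\Psi^{*}}$. To prove $w:=E(\Phi^{*}(\bar u))\in L^{\Theta^{*}}(\mathcal{A})$ I would use the Orlicz-norm (duality) description of $L^{\Theta^{*}}$: it suffices to bound $\int_X w\,v\,d\mu$ uniformly over $\mathcal{A}$-measurable $v\ge0$ with $\int_X\Theta(v)\,d\mu\le1$. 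The decisive move is the substitution $g:=\Phi^{*^{-1}}(v)$, which uses the defining identity $\Theta=\Psi^{*}\circ\Phi^{*^{-1}}$ to turn the constraint into $\int_X\Psi^{*}(g)\,d\mu\le1$; and since $v$ and $g$ are $\mathcal{A}$-measurable, pulling $E$ through produces $\int_X w\,v\,d\mu=\int_X\Phi^{*}(|u|)\,\Phi^{*}(g)\,d\mu$.

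The hard part will be this last step: comparing the product $\int_X\Phi^{*}(|u|)\Phi^{*}(g)\,d\mu$ with the quantity $\int_X\Phi^{*}(|u|g)\,d\mu$ that the adjoint estimate actually controls. This product-splitting is precisely where the hypotheses $\Phi^{*}\in\Delta_2$ and $\Theta\in\Delta_2$ must enter, both to pass freely between the modulars and the Luxemburg norms and to dominate $\Phi^{*}(|u|)\Phi^{*}(g)$ by a controlled multiple of $\Phi^{*}(|u|g)$ up to an integrable error. Once $w\in L^{\Theta^{*}}(\mathcal{A})$ is established, the final assertion is routine: since $\Phi^{*}$ is an $N$-function, $(\Theta^{*}\circ\Phi^{*})(\Phi^{*^{-1}}(w))=\Theta^{*}(w)$ pointwise, so $\int_X(\Theta^{*}\circ\Phi^{*})(\Phi^{*^{-1}}(w))\,d\mu=\int_X\Theta^{*}(w)\,d\mu<\infty$, and a further appeal to the $\Delta_2$-condition handles the Luxemburg scaling, giving $\Phi^{*^{-1}}(E(\Phi^{*}(\bar u)))\in L^{\Theta^{*}\circ\Phi^{*}}(\mathcal{A})$.
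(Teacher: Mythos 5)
Your part (i) is correct and is essentially the paper's own argument: the paper also restricts to the unit ball, shows $\|\Phi^{-1}(E(\Phi(|f|)))\|_{\Phi}\leq 1$, and then invokes the H\"older-type inequality $\|gh\|_{\Psi}\lesssim\|g\|_{\Phi}\|h\|_{\Theta}$ coming from $\Phi^{-1}(x)\Theta^{-1}(x)\leq\Psi^{-1}(x)$; you additionally supply the proof of that inequality (via $\Psi(ab)\leq\Phi(a)+\Theta(b)$ and convexity), which the paper merely asserts, so this half is complete.

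Part (ii), however, has a genuine gap, and it sits exactly at the step you yourself flagged as ``the hard part.'' A pointwise domination of the form $\Phi^{*}(|u|)\Phi^{*}(g)\leq C\,\Phi^{*}(|u|g)$ is \emph{not} a consequence of $\Phi^{*}\in\Delta_2$ and $\Theta\in\Delta_2$, no matter how one shuttles between modulars and Luxemburg norms. An inequality $\Phi^{*}(x)\Phi^{*}(y)\leq\Phi^{*}(bxy)$ is the $\nabla'$-condition of Section 2, and it is independent of $\Delta_2$: for example $\Phi^{*}(x)=|x|\log(1+|x|)$ satisfies $\Delta_2$ globally (since $1+2x\leq(1+x)^2$ gives $\Phi^{*}(2x)\leq4\Phi^{*}(x)$), yet $\Phi^{*}(x)^2/\Phi^{*}(bx^2)\sim\log x/(2b)\to\infty$, so no constant $b$ or $C$ can work, not even for large arguments, and there is no hope of an ``integrable error'' either, since $|u|$ and $g$ may simultaneously be large on sets of positive measure. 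The $\Delta_2$-assumptions do exactly what you use them for elsewhere (turning $\int_X\Theta(v)\,d\mu\leq1$ into the constraint $\int_X\Psi^{*}(g)\,d\mu\leq1$, and turning the norm bound $\|ug\|_{\Phi^{*}}\leq 2K$ into a uniform modular bound $\int_X\Phi^{*}(|u|g)\,d\mu<\infty$), but they cannot produce the product-splitting step, so your duality scheme does not close. It is worth saying that your architecture for (ii) — the adjoint-type estimate $\|ug\|_{\Phi^{*}}\lesssim\|g\|_{\Psi^{*}}$, the substitution $g=\Phi^{*^{-1}}(v)$ using $\Theta=\Psi^{*}\circ\Phi^{*^{-1}}$, then duality for $L^{\Theta^{*}}$ — is precisely the paper's: the paper passes to $M_{\bar u}=(EM_u)^{*}:L^{\Psi^{*}}(\mathcal{A})\to L^{\Phi^{*}}(\Sigma)$, tests against $f\in L^{\Theta}(\mathcal{A})$, and writes $\int_X\Phi^{*}(\bar u)\Phi^{*}(\Phi^{*^{-1}}(f))\,d\mu\leq b\int_X\Phi^{*}(\bar u\,\Phi^{*^{-1}}(f))\,d\mu$. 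But that displayed inequality is exactly a global $\nabla'$-type property of $\Phi^{*}$ (note the constant $b$, the paper's $\nabla'$ notation), which appears nowhere among the stated hypotheses; the paper silently glosses the very point you could not fill in. The honest repair, for both your argument and the paper's, is to add $\Phi^{*}\in\nabla'$ (globally) to the hypotheses of (ii); with that assumption your outline goes through verbatim.
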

 \begin{proof} (i) Let $f\in L^{\Phi}(\Sigma)$ such that $\|f\|_{\Phi}\leq1$. This means that
 $$\int_{X}\Phi(\Phi^{-1}(E(\Phi(f))))d\mu=\int_{X}\Phi(f)d\mu\leq1$$
 and so $\|(\Phi^{-1}(E(\Phi(f))))\|_{\Phi}\leq1$. Therefore by using GCH-inequality we have
 \begin{align*}
 \|(E(uf))\|_{\Psi}&\leq \|(\Phi^{-1}(E(\Phi(f))))\|_{\Phi}\|(\Phi^{*^{-1}}(E(\Phi^*(u))))\|_{\Theta}\\
 &\leq \|(\Phi^{*^{-1}}(E(\Phi^*(u))))\|_{\Theta}.
 \end{align*}
Thus for all $f\in L^{\Phi}(\Sigma)$ we have $$\|(E(uf))\|_{\Psi}\leq \|f\|_{\Phi}\|\Phi{*^{-1}}E(\Phi^*(u)\|_{\Theta}.$$ And so the operator $EM_u$ is bounded.\\

(ii) Suppose that $EM_u$ is bounded. So the adjoint operator $M_{\bar{u}}=(EM_u)^{\ast}:L^{\Psi^*}(\mathcal{A})\rightarrow L^{\Phi^*}(\Sigma)$ is bounded. For $f\in L^{\Theta}(\mathcal{A})$ we have $\Phi^{*^{-1}}(f)\in L^{\Psi^*}(\mathcal{A})$. Consequently we get that
\begin{align*}
\int_{X}E(\Phi^*(\bar{u}))fd\mu&=\int_{X}\Phi^*(\bar{u})\Phi^*(\Phi^{*^{-1}}(f))d\mu\\
&\leq b\int_{X}\Phi^*(\bar{u}\Phi^{*^{-1}}(f))d\mu\\
&=b\int_{X}\Phi^*(M_{\bar{u}}(\Phi^{*^{-1}}(f)))d\mu<\infty.
\end{align*}
Therefore $\int_{X}E(\Phi^*(\bar{u}))fd\mu<\infty$ for all $f\in L^{\Theta}(\mathcal{A})$. This implies that $E(\Phi^*(\bar{u}))\in L^{\Theta^*}(\mathcal{A})$. This completes the proof.
\end{proof}
\begin{rem} The following results on the classical Lebesgue spaces, presented in \cite{ej}, are an immediate consequence of Theorems \ref{t44}, \ref{t3} and \ref{t36}.

\begin{itemize}\item[(1)] Taking $\Phi(x)=|x|^p/p$ and $\Psi(x)=|x|^q/q$, where $1<p<q<\infty$ and $\frac{1}{p}+\frac{1}{p'}=1$, $\frac{1}{q}+\frac{1}{q'}=1$, by  we obtain that the operator $EM_{u}$ induced by a function $u\in L_+^0(\Sigma)$ is bounded from $L^{p}(\Sigma)$ into $L^{q}(\Sigma)$ if and only if the following conditions hold:
\begin{enumerate}
\item[(i)] $E(u^{p'})(x)=0$ for $\mu$-almost all $x\in B$;

\item[(ii)] $\sup_{n\in \mathbb{N}}\frac{(E(u^{p'})(A_n))^{\frac{q}{p'}}}{(\mu(A_n))^{\frac{q}{p}}}<\infty$, where $q^{-1}+r^{-1}=p^{-1}$.
\end{enumerate}

\item[(2)] Similarly, taking $\Phi(x)=|x|^p/p$ and $\Psi(x)=|x|^q/q$, where $1<q<p<\infty$. Let $EM_{u}:\mathcal{D}\subseteq L^{p}(\Sigma)\rightarrow L^{q}(\Sigma)$ be well defined. Then the operator $EM_{u}$ from $L^{p}(\Sigma)$ into $L^{q}(\Sigma)$, where $1<q<p<\infty$, is bounded if and only if $(E(|u|^{p'}))^{\frac{1}{p'}}\in L^{r}(\mathcal{A})$, where $r=\frac{pq}{p-q}$.\\
    Specially, if $\mathcal{A}=\Sigma$, then $E=I$ and so the multiplication operator $M_u$ from $L^{p}(\Sigma)$ into $L^{q}(\Sigma)$ is bounded if and only if $u\in L^{r}(\Sigma)$.
\end{itemize}
\end{rem}

\section{ \sc\bf   MCE operators with closed-range and/or finite rank }

In this section we are going to investigate closed-range MCE operators between distinct Orlicz spaces.

 First we characterize closed-range MCE operators $EM_u:L^{\Phi}(\Sigma)\rightarrow L^{\Psi}(\Sigma)$ under the assumption $\Psi(xy)\leq\Phi(x)+\Theta(y)$  for all $x, y\geq 0$.
\begin{thm}\label{t33} Let $\Phi, \Psi, \Theta$ be  Young functions vanishing only at zero, taking only finite values, and such that  $\Psi,\Theta\in \Delta_2$ and $\Psi(xy)\leq\Phi(x)+\Theta(y)$  for all $x,y\geq 0$. If $u\in L_+^{\Theta}(\Sigma)$, then  the MCE operator $EM_u$ is a bounded operator from $L^{\Phi}(\Sigma)$ into $L^{\Psi}(\mathcal{A})$ and the following assertions are equivalent:
\begin{enumerate}

\item[(a)]$E(\Phi^{*}(u))=0$ a.e. on $B$ and the set $E=\{n\in \mathbb{N}: E(\Phi^{*}(u))(A_n)\neq 0\}$ is finite.
\item[(b)]$EM_u$ has finite rank.
\item[(c)]$EM_u$ has closed range.
\end{enumerate}
\end{thm}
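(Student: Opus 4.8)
The theorem asserts the equivalence of three statements under the hypotheses $\Psi,\Theta\in\Delta_2$, $\Psi(xy)\le\Phi(x)+\Theta(y)$, and $u\in L^\Theta_+(\Sigma)$. I would first dispose of the boundedness claim, then prove the cycle $(a)\Rightarrow(b)\Rightarrow(c)\Rightarrow(a)$. For boundedness, the hypothesis $\Psi(xy)\le\Phi(x)+\Theta(y)$ together with $u\in L^\Theta$ should let me estimate $\int_X\Psi(E(uf))\,d\mu$ in terms of $\int_X\Phi(f)\,d\mu$ and $\int_X\Theta(u)\,d\mu$, using that $E$ is a contraction (the property $\Phi(E(f))\le E(\Phi(f))$ recorded in the preliminaries) and that $S(E(uf))\subseteq S(E(u))$. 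The role of $\Delta_2$ here is to ensure the modular and norm topologies coincide, so that finiteness of the modular on a ball gives genuine norm-boundedness.

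\textbf{The implications.} For $(a)\Rightarrow(b)$: condition $(a)$ says $E(\Phi^*(u))$ vanishes on the non-atomic part $B$ and is supported on only finitely many atoms $A_n$. By Lemma~\ref{l1n}, $S(E(u))=S(E(\Phi^*(u)))$ (applied with the Young function $\Phi^*$), so $E(uf)=E(u)\,E(f)$ is supported on this finite atomic set for every $f$; hence the range of $EM_u$ is contained in the finite-dimensional span of $\{\chi_{A_n}:n\in E\}$, giving finite rank. The implication $(b)\Rightarrow(c)$ is immediate since any finite-rank operator between Banach spaces has closed (indeed finite-dimensional) range. The substantive direction is $(c)\Rightarrow(a)$, which I would prove by contraposition: assuming $(a)$ fails, I construct a sequence witnessing that the range is not closed.

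\textbf{The contrapositive and the main obstacle.} Suppose $(a)$ fails. Two cases arise. If $E(\Phi^*(u))\ne 0$ on a non-atomic subset, I would use Lemmas~\ref{l2} and~\ref{p1}: since $\Psi(xy)\le\Phi(x)+\Theta(y)$ forces $\Phi\overset{\ell}{\nprec}\Psi$, there is an $f\in L^\Phi$ whose restriction to a non-atomic set escapes $L^\Psi$, and I would leverage this to show $EM_u$ cannot have closed range (indeed is unbounded or has dense non-closed image). If instead $E=\{n:E(\Phi^*(u))(A_n)\ne 0\}$ is infinite, the plan is to construct normalized functions $f_n$ supported on the atoms $A_n$ (taking $f_n$ proportional to $\chi_{A_n}$, scaled so $\|f_n\|_\Phi=1$) whose images $EM_u f_n$ have norms tending to zero along a subsequence while the $f_n$ stay bounded away from the kernel; this exhibits the failure of the lower bound $\|EM_u f\|_\Psi\ge c\|f\|_\Phi$ on $(\ker EM_u)^\perp$, which is equivalent to closed range for a bounded operator. \textbf{I expect the infinite-atomic case to be the main obstacle:} the delicate point is selecting the normalization so that $E(\Phi^*(u))(A_n)\to 0$ along the chosen atoms (which must happen if the $\sup$ in a closed-range lower bound were to hold), and then converting this into a genuine sequence of unit vectors that are \emph{not} asymptotically in the kernel, so that their vanishing images contradict the closed-range inequality. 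Here the two $\Delta_2$ conditions are essential, both to pass freely between modulars and norms on $f_n$ and on $EM_u f_n$, and to guarantee the relevant quotient norms behave additively across disjoint atoms.
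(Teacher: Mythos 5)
Your handling of boundedness and of the implications (a)$\Rightarrow$(b)$\Rightarrow$(c) is essentially sound and parallels the paper, apart from one false identity: $E(uf)=E(u)E(f)$ does not hold in general, since conditional expectation is multiplicative only over $\mathcal{A}$-measurable factors. What you actually need is the inclusion $S(E(uf))\subseteq S(E(u))=S(E(\Phi^{*}(u)))$ (the equality is Lemma \ref{l1n}); the paper gets the inclusion from the GCH inequality, and one can also see it directly: if $E(u)=0$ a.e.\ on $A\in\mathcal{A}$, then $\int_A u\,d\mu=0$, hence $u=0$ a.e.\ on $A$ (as $u\geq 0$), hence $E(uf)=0$ a.e.\ on $A$. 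That flaw is local and fixable.

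The genuine gap is in (c)$\Rightarrow$(a), and it is structural. First, you correctly note that $\Psi(xy)\leq\Phi(x)+\Theta(y)$ forces, via Lemma \ref{l2}, that $\Psi$ is not stronger than $\Phi$; but Lemma \ref{p1} then produces a function $f\in L^{\Psi}$ whose restriction to a non-atomic set is \emph{not} in $L^{\Phi}$ --- not, as you assert, a function of $L^{\Phi}$ escaping $L^{\Psi}$ (that would require the opposite relation, which is not available here). Consequently you cannot feed this function into $EM_u$, and your stated goal of concluding that $EM_u$ ``is unbounded'' is unattainable in principle: the theorem's standing hypotheses ($u\in L^{\Theta}_+$, $\Psi(xy)\leq\Phi(x)+\Theta(y)$, $\Psi,\Theta\in\Delta_2$) already make $EM_u$ bounded, whatever happens on $B$; failure of (a) can only destroy closedness of the range. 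Second, in the infinite-atomic case you yourself flag the decisive step --- keeping the normalized $f_n$ uniformly away from $\ker EM_u$ while $\|EM_uf_n\|_{\Psi}\to 0$ --- as the ``main obstacle,'' and your sketch does not supply it; nothing you write rules out $\mathrm{dist}(f_n,\ker EM_u)\to 0$. The paper disposes of both cases with a single device that is absent from your proposal: if the range is closed, then, since $EM_u\bigl(\frac{1}{E(u)}\chi_A\bigr)=\chi_A$ places every characteristic function of a finite-measure $\mathcal{A}$-subset of $G=\{E(\Phi^{*}(u))\geq\delta\}\subseteq B$ (resp.\ of $S=\bigcup_{n\in E}A_n$) inside the range, the range is dense in and hence equal to $L^{\Psi}(G)$ (resp.\ $L^{\Psi}(S)$); the bounded-inverse theorem then yields a bounded operator from $L^{\Psi}$ into $L^{\Phi}$ given by multiplication by $1/E(u)$, and Theorem \ref{t3} is applied to \emph{this inverse operator}, whose direction $L^{\Psi}\to L^{\Phi}$ is exactly the one matched by the hypothesis $\Psi(xy)\leq\Phi(x)+\Theta(y)$. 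On $G$ this forces $1/E(u)=0$ a.e., a contradiction, so $E(\Phi^{*}(u))=0$ a.e.\ on $B$; on $S$ it gives $C:=\sup_{n\in E}\frac{1}{E(u)(A_n)}\,\Theta^{-1}(1/\mu(A_n))<\infty$, whence $1\leq\Theta\bigl(CE(u)(A_n)\bigr)\mu(A_n)$ for every $n\in E$ and $\sum_{n\in E}1\leq\int_X\Theta(Cu)\,d\mu<\infty$ (using $u\in L^{\Theta}_+$ and $\Theta\in\Delta_2$), so $E$ is finite. This pivot from the closed-range hypothesis to a bounded inverse, to which the boundedness-necessity theorem is applied, is the heart of the proof and is what your contrapositive plan is missing.
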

\begin{proof} Let $S$ be the support of $E(\Phi^{*}(u))$. By GCH inequality we get that $S(E(uf))\subseteq S$ for all $f\in L^{\Phi}(\Sigma)$.  We may assume that $\mu(S)>0$, since otherwise $EM_u$ is a zero operator and there is nothing to prove.

We prove the implication (a) $\Rightarrow$ (b). Assume that (a) holds. Hence there is $r\in \mathbb{N}$ such that
$$
S=\bigcup\limits_{n\in E}{\frak A}_n=A_{n_1}\cup\ldots\cup A_{n_r}.
$$
Since $\Psi \in \Delta_2$, the set $\{\chi_{A_{n_1}},\ldots,\chi_{A_{n_r}}\}$ of characteristic functions generates the subspace
\begin{center}
$\{g\in L^{\Psi}(X,\mathcal{A}): g(x)=0$  for $\mu$-a.e. $x\in X\setminus S\}\cong L^{\Psi}(S,\mathcal{A}_{S})$.
\end{center}
The range of $EM_u$ is contained in the $r$-dimensional subspace $L^{\Psi}(\mathcal{A}_S)$, hence $EM_u$ has finite rank.

 (b) $\Rightarrow$ (c). If the range of the operator $EM_u$ in  $L^{\Psi}(\mathcal{A})$ is finite-dimensional, then it is also closed, as any finite-dimensional subspace of a Banach space is a closed subspace of this space.

 (c) $\Rightarrow$ (a). Let $EM_u$ have closed range and assume that $\mu\{x\in B:E(\Phi^{*}(u))(x)\neq0\}>0$. Then there is  $\delta>0$ such that the set $G=\{x\in B: E(\Phi^{*}(u))(x)\geq\delta\}$ has positive measure. Since $G$ is an $\mathcal{A}$- measurable set, then $EM_u(L^{\Phi}(G))\subseteq L^{\Psi}(G)$. It is easy to see that the restriction $E(u)_{|_G}$ induces a bounded MCE operator $EM_{u_{|_{G}}}=EM_{E(u)_{|_{G}}}$ from $L^{\Phi}(G,\mathcal{A}_G,\mu_{\mathcal{A}_G})$ into $L^{\Psi}(G,\mathcal{A}_G,\mu_{\mathcal{A}_G})$, and that if $EM_u$ has closed range, then  $EM_{u_{|_{G}}}$ has closed range as well.

 We will show that $EM_{u_{|_{G}}}(L^{\Phi}(G))=L^{\Psi}(G)$. Let $A$ be any $\mathcal{A}$-measurable subset of $G$ with $\mu(A)<\infty$, and define  the function $f_A:=\frac{1}{E(u_{|_{G}})}\chi_{A}$ in which $E(u_{|_{G}})=E(u)_{|_{G}}$.  We get
$$
I_{\Phi}(f_A)=\int_G \Phi\circ f_A\,d\mu=\int_A \Phi\circ\frac{1}{u_{|_{G}}}\,d\mu\leq\Phi(1/\delta)\mu(A)<\infty,
$$
  and so $f_A\in L^{\Phi}(G)$. Moreover,  $EM_{u_{|_{G}}} f_A =\chi_{A}$, which implies that the linear space $EM_{u_{|_{G}}}(L^{\Phi}(G))$ contains the set $\mathcal{F}$ of all linear combinations of characteristic functions of measurable subsets of $G$ with positive and finite measure.

  Now $\mathcal{F}$ is a dense subset of $L^{\Psi}(G)$ and, by assumption, $EM_{u_{|_{G}}}(L^{\Phi}(G))$ is a closed subspace of $L^{\Psi}(G)$, therefore $$L^{\Psi}(G)=\overline{\mathcal{F}}\subset EM_{u_{|_{G}}}(L^{\Phi}(G))\subset L^{\Psi}(G),$$
and so $EM_{u_{|_{G}}}(L^{\Phi}(G))= L^{\Psi}(G),$ as claimed.

  Consequently, we can define the inverse MCE operator
$$
EM_{\frac{1}{E(u_{|G})}}:L^{\Psi}(G)\rightarrow L^{\Phi}(G),\quad EM_{\frac{1}{E(u_{|G})}} f:=\frac{1}{E(u_{|_{G}})}f.
$$
The operator $EM_{\frac{1}{E(u_{|G})}}$ is bounded and, since $\Psi(xy)\leq\Phi(x)+\Theta(y)$\,\, ($x,y\geq 0$), we can apply Theorem \ref{t3} to conclude that $\frac{1}{E(u)}=0$  $\mu$-a.e. on $G$, which is absurd. This contradiction shows that $E(u)=0$  $\mu$-a.e. on $B$.

Next we show that the set
 $$E=\{n\in \mathbb{N}:E(\Phi^{*}(u))(A_n)\neq 0\}=\{n\in \mathbb{N}:E(u)(A_n)\neq 0\}$$
  is finite if $EM_u$ has closed range. If $E=\emptyset$, we have nothing to prove. So let us assume that $E\neq \emptyset$. Define $S=\bigcup\limits_{n\in E}A_n$.

Analogously as above, we can show that $EM_{u_{|_{S}}}(L^{\Phi}(S))=L^{\Psi}(S)$. Indeed, let $A$ be an $\mathcal{A}$-measurable subset of $S$ with $\mu(A)<\infty$. Define the function $f_A:=\frac{1}{E(u)_{|_{S}}}\chi_{A}$. The set $A$ having finite measure, we get
$$
I_{\Phi}(f_A)=\int_A \Phi\circ\frac{1}{E(u)_{|_{S}}}\,d\mu=\sum_{A_n\subset A}\Phi(1/E(u)(A_n))\mu(A_n)<\infty.
$$
Hence $f_A\in L^{\Phi}(S)$. Since  $EM_{u_{|_{S}}} f_A =\chi_{A}$, we conclude that $EM_{u_{|_{S}}}(L^{\Phi}(S))$ contains the set $\ell_f^0$ of all linear combinations of characteristic functions of subsets of $S$ with positive and finite measure.

Now $\ell_f^0$ is a dense subset of $L^{\Psi}(S)$, and $EM_{u_{|_{S}}}(L^{\Phi}(S))$ is a closed subspace of $L^{\Psi}(S)$, which implies that
$$L^{\Psi}(S)=\overline{\ell_f^0}\subset EM_{u_{|_{S}}}(L^{\Phi}(S))\subset L^{\Psi}(S),$$
and so $EM_{u_{|_{S}}}(L^{\Phi}(S))= L^{\Psi}(S)$.

We can thus define a bounded MCE operator $EM_{\frac{1}{u_{|_S}}}$ from $ L^{\Psi}(S, \mathcal{A}_S,\mu_{\mathcal{A}_S})$ into $L^{\Phi}(S, \mathcal{A}_S,\mu_{\mathcal{A}_S})$. Applying Theorem \ref{t3} to the operator $EM_{\frac{1}{u_{|_S}}}$, we obtain
$$
 \sup\limits_{n\in E} \frac{1}{E(u)(A_n)}\,\Theta^{-1}(\frac{1}{\mu(A_n)})< \infty.
 $$

Let $C=\sup\limits_{n\in E} \frac{1}{E(u)(A_n)}\,\Theta^{-1}(\frac{1}{\mu(A_n)})>0$. Since $E\neq\varnothing$ and $1\leq \Theta(Cu(A_n))\,\mu(A_n)$  for all $n\in E$, we have
 \begin{align*}
 \sum_{n\in E}1 \leq\sum_{n\in E}\Theta (CE(u)(A_n))\,\mu(A_n)&=\sum_{n\in E}\int_{A_n}\Theta\circ CE(u)\,d\mu\leq\int_{X}\Theta\circ Cu\,d\mu<\infty,
 \end{align*}
where the final inequality follows from the assumption that $\Theta\in \Delta_2$. Thus $E$ must be finite.
\end{proof}

In the next theorem we characterize closed-range MCE operators $EM_u:L^{\Phi}(\Sigma)\rightarrow L^{\Psi}(\Sigma)$ under the condition  that $\Phi(xy)\leq\Psi(x)+\Theta(y)$  for all $x, y\geq 0$.

\begin{thm}\label{t34} Let $\Phi, \Psi, \Theta$ be Young functions vanishing only at zero, taking only finite values, and such that  $\Psi,\Theta\in\Delta_2$ and $\Phi(xy)\leq\Psi(x)+\Theta(y)$ for all $x,y\geq 0$. If $EM_u$ is a bounded multiplication operator from $L^{\Phi}(\Sigma)$ into $L^{\Psi}(\Sigma)$ and  $\frac{1}{E(u)}\in L_+^{\Theta}(\Sigma)$,  then the following statements  are equivalent:
\begin{enumerate}

\item[(a)]The set $E=\{n\in \mathbb{N}: E(u)(A_n)\neq 0\}$ is finite.

\item[(b)]$EM_u$ has finite rank.

\item[(c)]$EM_u$ has closed range.
\end{enumerate}
\end{thm}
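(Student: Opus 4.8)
The plan is to prove the cycle (a)$\,\Rightarrow\,$(b)$\,\Rightarrow\,$(c)$\,\Rightarrow\,$(a), after first recording what the standing boundedness hypothesis already buys. Since $\Phi(xy)\leq\Psi(x)+\Theta(y)$ for all $x,y\geq0$ and $EM_u$ is bounded from $L^{\Phi}(\Sigma)$ into $L^{\Psi}(\Sigma)$, Theorem \ref{t3} applies \emph{directly} to $EM_u$ and supplies the two facts I would use throughout: $E(u)=0$ $\mu$-a.e.\ on the non-atomic part $B$, and
$$
C:=\sup_{n\in\mathbb{N}}E(u)(A_n)\,\Theta^{-1}\!\big(1/\mu(A_n)\big)<\infty .
$$
The first fact is exactly why condition (a) need only control the atoms: for every $f$ one has $S(E(uf))\subseteq S(E(u))$, and the latter now sits inside $\bigcup_n A_n$. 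This is the structural difference from Theorem \ref{t33}: there the hypothesis $\Psi(xy)\leq\Phi(x)+\Theta(y)$ forced one to apply Theorem \ref{t3} to the \emph{inverse} operator, whereas here the opposite ordering lets me apply it to $EM_u$ itself.

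For (a)$\,\Rightarrow\,$(b), I would assume $E=\{n:E(u)(A_n)\neq0\}$ is finite, say $E=\{n_1,\dots,n_r\}$, and set $S=A_{n_1}\cup\cdots\cup A_{n_r}$. Together with $E(u)=0$ on $B$ this gives $S(EM_uf)\subseteq S$ for every $f\in L^{\Phi}(\Sigma)$, so the range of $EM_u$ lies in $L^{\Psi}(S,\mathcal{A}_S)$; as every $\mathcal{A}$-measurable function is constant on each atom, this space is $r$-dimensional, whence $EM_u$ has finite rank. The implication (b)$\,\Rightarrow\,$(c) is immediate, a finite-dimensional subspace of a Banach space being closed.

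The implication (c)$\,\Rightarrow\,$(a) is the crux, and it is where I would combine the two hypotheses. From the bound $C$ above, each $n\in E$ satisfies $\Theta^{-1}(1/\mu(A_n))\leq C/E(u)(A_n)$; applying the increasing function $\Theta$ and multiplying by $\mu(A_n)$ turns this into the unit lower bound $\mu(A_n)\,\Theta\big(C/E(u)(A_n)\big)\geq1$. Summing over $E$ and rewriting the sum as an integral over $S=\bigcup_{n\in E}A_n$ gives the key estimate
$$
\#E=\sum_{n\in E}1\ \leq\ \sum_{n\in E}\mu(A_n)\,\Theta\!\Big(\frac{C}{E(u)(A_n)}\Big)\ =\ \int_{S}\Theta\!\Big(\frac{C}{E(u)}\Big)\,d\mu .
$$
The right-hand side is finite because $\tfrac{1}{E(u)}\in L^{\Theta}(\Sigma)$ — read on the support $S(E(u))\supseteq S$, off which $E(u)$ vanishes — together with $\Theta\in\Delta_2$, the $\Delta_2$-condition serving precisely to absorb the constant $C$ into the finite modular of $1/E(u)$. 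Hence $\#E<\infty$, which is (a).

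The step I expect to cause the most friction is this last counting argument, for two reasons. First, one must keep every inequality pointing the right way: the \emph{upper} bound on $E(u)(A_n)$ coming from boundedness has to be converted into a \emph{lower} bound of $1$ for each $\Theta$-term, and here it matters that $\Theta$ is strictly increasing and finite-valued so that $\Theta(\Theta^{-1}(x))=x$. Second, one must be careful about the meaning of $1/E(u)$, since $E(u)$ vanishes on $B$ and on the atoms outside $S$; the hypothesis $\tfrac1{E(u)}\in L^{\Theta}(\Sigma)$ is to be understood on $S(E(u))$, so that the integral over $S$ really does coincide with a piece of the finite $\Theta$-modular. A route parallel to Theorem \ref{t33}, which I would keep in reserve, is to first use the $f_A=\tfrac{1}{E(u)}\chi_A$ construction (checking $f_A\in L^{\Phi}(S)$ via $\Phi(y)\leq\Psi(1)+\Theta(y)$ and $\tfrac1{E(u)}\in L^\Theta$) to show that $EM_u$ restricted to $S$ carries $L^{\Phi}(S)$ onto the dense simple functions, hence — using closed range and $\Psi\in\Delta_2$ — onto all of $L^{\Psi}(S)$; but for the finiteness conclusion the direct estimate above already suffices.
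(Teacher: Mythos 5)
Your proposal is correct and follows essentially the same route as the paper: Theorem \ref{t3} applied to the bounded operator $EM_u$ (the paper applies it to the restriction to $S=\bigcup_{n\in E}A_n$, which is the same bound) yields $E(u)=0$ on $B$ and the supremum $C$, the implications (a)$\Rightarrow$(b)$\Rightarrow$(c) are handled exactly as in Theorem \ref{t33}, and the counting estimate $\#E\leq\sum_{n\in E}\Theta\bigl(C/E(u)(A_n)\bigr)\mu(A_n)\leq\int_X\Theta\circ\frac{C}{E(u)}\,d\mu<\infty$ via $\frac{1}{E(u)}\in L^{\Theta}_+$ and $\Theta\in\Delta_2$ is precisely the paper's argument. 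Your observation that this step never actually invokes the closed-range hypothesis is also true of the paper's own proof.
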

\begin{proof}
By Theorem \ref{t3}, we have that $E(u)=0$ on $B$. The proofs of implications (a) $\Rightarrow$ (b) and (b) $\Rightarrow$ (c) are as in the proof of  Theorem \ref{t33}.

 We prove the implication (c) $\Rightarrow$ (a). Using the  same notation as in the proof of Theorem \ref{t33}, let $S=\bigcup\limits_{n\in E}A_n$ and $E\neq\varnothing$. Since  $EM_u:L^{\Phi}(S)\rightarrow L^{\Psi}(S)$ is bounded, by Theorem \ref{t3}, we have
$$
\sup\limits_{n\in E} E(u)(A_n)\,\Theta^{-1}\left(\frac{1}{\mu(A_n)}\right)< \infty
$$
	
	Let $C=\sup\limits_{n\in E} E(u)(A_n)\,\Theta^{-1}(\frac{1}{\mu(A_n)})>0$. Since $E\neq\varnothing$, $1\leq \Theta\left(\frac{C}{E(u)(A_n)}\right)\,\mu(A_n)$ for all $n\in E$, and $\Theta \in \Delta_2$, we may write
	\begin{align*}
	\sum_{n\in E}1\leq\sum_{n\in E}\Theta\left(\frac{C}{E(u)(A_n)}\right)\,\mu(A_n)&=\sum_{n\in E}\int_{A_n}\Theta\circ \frac{C}{E(u)}\,d\mu\leq\int_{X} \Theta\circ\frac{C}{E(u)}\,d\mu<\infty.
	\end{align*}
	Thus $E$ is finite.
\end{proof}
\begin{rem}
As an applications of our results we derive characterizations of bounded and closed-range MCE operators in the special case of $L^p$-spaces.

\begin{itemize}
\item[(1)] If for $1<p<q<\infty$ and  the MCE operator $EM_{u}$ is bounded from $L^{p}(\Sigma)$ into $L^{q}(\Sigma)$ and $\frac{1}{E(u)}\in L^{\frac{pq}{p-q}}(\mathcal{A})$, then the following assertions are equivalent:

\begin{enumerate}
\item[(a)] $EM_u$ has closed range.

\item[(b)] $EM_u$ has finite rank.

\item[(c)] The set $\{n\in \mathbb{N}: E(u)(A_n)\neq0\}$ is finite.

\end{enumerate}

\item[(2)] If for $1<q<p<\infty$ the multiplication operator $M_{u}$ is bounded from $L^{p}(\Sigma)$ into $L^{q}(\Sigma)$, then the following assertions are equivalent:

\begin{enumerate}
\item[(a)]  $EM_u$ has closed range.

\item[(b)] $EM_u$ has finite rank.

\item[(c)]  $(u^{p'})(x)=0$ for $\mu$-almost all $x\in B$, and the set $\{n\in \mathbb{N}:E(u^{p'})(A_n)\neq0\}$ is finite.\\
\end{enumerate}
\end{itemize}
\end{rem}


\end{document}